\tikzset{pullback/.style={minimum size=1.2ex,path picture={
\draw[opacity=1,black,-,#1] (-0.5ex,-0.5ex) -- (0.5ex,-0.5ex) -- (0.5ex,0.5ex);%
}}}
\theoremstyle{plain}
\newtheorem{theorem}{Theorem}[section]
\newtheorem{proposition}[theorem]{Proposition}
\newtheorem{lemma}[theorem]{Lemma}
\newtheorem{corollary}[theorem]{Corollary}
\theoremstyle{definition}
\newtheorem{example}[theorem]{Example}
\newtheorem{definition}[theorem]{Definition}
\newtheorem*{definition*}{Definition}
\theoremstyle{remark}
\newtheorem{remark}[theorem]{Remark}
\newcommand{\dq}[1]{``#1"}
\newcommand{\revmemo}[1]{}
\newcommand{\Nor}{\mathrm{N}}
\newcommand{\C}{\mathcal{C}}
\newcommand{\E}{\mathcal{E}}
\newcommand{\F}{\mathcal{F}}
\newcommand{\G}{\mathbb{G}}
\newcommand{\id}{\mathrm{id}}
\newcommand{\ob}{\mathrm{ob}}
\newcommand{\true}{\mathrm{true}}
\newcommand{\Image}{\mathrm{Im}}
\newcommand{\Sub}{\mathrm{Sub}}
\newcommand{\Set}{\mathbf{Set}}
\newcommand{\Group}{\mathbf{Grp}}
\newcommand{\FinSet}{\mathbf{FinSet}}
\newcommand{\PSh}{\mathbf{PSh}}
\newcommand{\Sh}{\mathbf{Sh}}
\newcommand{\sgt}{\{\cdot\}}
\newcommand{\demph}[1]{\textit{#1}}
\newcommand{\yo}{\mathrm{y}}
\newcommand{\mono}{\mathrm{mono}}
\newcommand{\A}{\Sigma}
\newcommand{\MA}{{{\Sigma}^{\ast}}}
\newcommand{\Aset}{\A\text{-}\Set}
\newcommand{\Lan}{\mathcal{L}}
\newcommand{\Pow}{\mathcal{P}}
\newcommand{\of}{\mathrm{o.f.}}
\newcommand{\ofAset}{{\Aset}_{\of}}
\newcommand{\Cont}{\mathbf{Cont}}
\title{Normalization of a subgroup, in a topos, and of a word-congruence}
\author{Ryuya Hora}
\thanks{Graduate School of Mathematical Sciences, University of Tokyo. \url{hora@ms.u-tokyo.ac.jp}}
\subjclass[2020]{18B25}
\keywords{Topos, normalization, hyperconnected geometric morphism, local state classifier}
\begin{document}
\begin{abstract}
This paper provides a new categorical definition of a normalization operator motivated by topos theory and its applications to algebraic language theory.

We first define a normalization operator $\Xi \to \Xi$ in any category that admits a colimit of all monomorphisms $\Xi$, which we call a local state classifier. In the category of group actions for a group $G$, this operator coincides with the usual normalization operator, which takes a subgroup $H\subset G$ and returns its normalizer subgroup $\Nor_G(H)\subset G$.

Using this generalized normalization operator, we prove a topos-theoretic proposition that provides an explicit description of a local state classifier of a hyperconnected quotient of a given topos.
We also briefly explain how these results serve as preparation for a topos-theoretic study of regular languages, congruences of words, and syntactic monoids.
\end{abstract}

\maketitle
\tableofcontents
\section{Introduction}
\subsection{Abstract definitions of a normalizer subgroup}\label{ssec:AbstractDefinitionOfNormalizer}
For a group $G$ and a subgroup $H\subset G$, the \demph{normalizer} of $H$ is defined by
\[
\Nor_{G}(H) \coloneqq \{g\in G \mid g^{-1}Hg=H\}.
\]
In the context of categorical algebra, the normalizer is abstractly described in \cite{gray2014normalizers} as follows: In a category $\C$ with a zero object, prototypically the category of groups $\C = \Group$, a monomorphism $m \colon S \rightarrowtail X$ is said to be normal if it is the kernel of an arrow from $X$. A normalizer of a subobject $m \colon H \rightarrowtail G$ in $\C$ is defined to be the terminal object of the category of factorizations of $m$ as a normal monomorphism followed by a monomorphism \cite[Definition 2.1]{gray2014normalizers}. This is a natural way to generalize the group-theoretic notion of a normalizer $\Nor_{G}(H)$ as the maximal subgroup of $G$ that makes the inclusion $H\subset \Nor_{G}(H)$ normal.

In this paper, we provide another abstract description of normalization — in a way that was (at least to the author) completely unexpected. Instead of considering the category of groups $\Group$,
we consider the category of right $G$-actions, i.e., the presheaf category $\PSh(G)$ on a given group $G$. By equipping the set of all subgroups $\Sub_{\Group}(G)$ with the right conjugate action $H\cdot g \coloneqq g^{-1}Hg$, we can regard the normalization operator
\begin{equation}\label{eq:NormalizationOperatorForAGroup}
    \Nor_G \colon \Sub_{\Group}(G) \to \Sub_{\Group}(G)
\end{equation}
as a morphism in the category $\PSh(G)$.
In this paper, we will describe this normalization operator purely categorically.

Let us overview our abstract construction.
First, we take the colimit of all monomorphisms in the category $\PSh(G)$, which exists nontrivially, and let us write $\Xi$ for it. The colimit cocone is a family of morphisms from all objects in the category, for which we will write $\xi_{X}\colon X \to \Xi$ for every $X \in \ob(\PSh(G))$. Then the \dq{self-referential} component of the colimit cocone $\xi_{\Xi}\colon \Xi \to \Xi$ coincides with the normalization operator $\Nor_G$ (Example \ref{exmp:CaseOfGroup})!


In general, we can define the normalization operator as follows:
\begin{definition*}[Paraphrase of Definition \ref{def:NormalizationOperator}]
For a category $\E$ that admits the colimit of all monomorphisms $\Xi$ with the colimit cocone $\{\xi_{X}\colon X \to \Xi\}_{X\in \ob(\E)}$, 
    the \demph{normalization operator} in $\E$ is the endomorphism $\xi_{\Xi}\colon \Xi \to \Xi.$
\end{definition*}
Such an object $\Xi$ exists in all Grothendieck topoi and, in particular, in all presheaf categories. Therefore, our generalized normalization operator appears in many contexts that are completely different from group theory. As examples, we will see the normalization operator for directed graphs (Example \ref{exmp:ToposOfDirectedGraphTwo}) and free monoid ($=$ words) actions (Section \ref{sec:MotivationgExample}). We also generalize the obvious inequality $H \subset \Nor_G(H)$ for a general context (Proposition \ref{prop:NormalizationLemma}), which plays a central role in the proof of the main theorem.

\subsection{Local state classifier in a hyperconnected quotient topos}\label{ssec:MotivationFromLSC}
A theoretical motivation of this paper comes from topos theory, especially the study of \demph{hyperconnected geometric morphisms}.

Since \cite{johnstone1981factorization} introduced the notion of hyperconnected geometric morphisms, topos theory has heavily utilized it. For example, in the study of topological monoid actions \cite{rogers2023toposes}, hyperconnected geometric morphisms play a central role. As explained in Subsection \ref{ssec:MotivationFromLanguages} and Section \ref{sec:MotivationgExample}, they are also important in the topos-theoretic approach to automata theory. Referring to the terminology `quotient topos' in \cite{lawvere2025open}, we call (an equivalence class of) a hyperconnected geometric morphism from a topos $\E$ a \demph{hyperconnected quotient} of $\E$. A way to enumerate all hyperconnected quotients is first given by \cite{rosenthal1982quotient} using generators of a given Grothendieck topos.

In order to obtain a canonical and simpler classification of hyperconnected quotients,
the colimit of all monomorphisms, denoted by $\Xi$, was introduced under the name \demph{the local state classifier} in the author's paper \cite{hora2024internal} .
The main theorem of the paper \cite{hora2024internal} states that if a topos has a local state classifier $\Xi$, then hyperconnected quotients of $\E$ are in a one-to-one correspondence with internal filters of $\Xi$. 
This result provides a convenient way to classify all the hyperconnected quotients of a broad class of topoi, including all Grothendieck topoi. Therefore, in order to obtain an explicit description of all the hyperconnected quotients, we need a way to calculate the local state classifier of a given topos.

However, explicitly describing the local state classifier $\Xi$ is not always easy. Although the local state classifier of a presheaf topos is explicitly given by $\Xi(c) = \{\text{quotient objects of $\yo(a)$}\}$ \cite[][Example 3.22]{hora2024internal}, it is not easy 
in the case of 
a non-presheaf topos.

The main theorem of the present paper (Theorem \ref{thm:MainTheorem}) provides a new method for describing the local state classifier of a hyperconnected quotient of a known topos. 
As corollaries, we obtain the description of the local state classifier of the topos of continuous actions $\Cont(G)$ for a given topological group $G$ (Corollary \ref{cor:LSCofTopologicalGroups}) and that of the topos of orbit-finite $\A$-sets $\ofAset$ (Corollary \ref{cor:LocalStateClassifierOfOrbitfiniteAset}).
The generalized normalization operator is utilized in its proof.

\subsection{Word combinatorics in algebraic language theory}\label{ssec:MotivationFromLanguages}
The most concrete motivation for this research is a topos-theoretic approach to algebraic language theory.
This paper serves as a theoretical preparation for the forthcoming paper, \textit{Topoi of automata II}. In automata theory, it is crucial to consider \textit{right congruences} on the words $\MA$ for a given alphabet $\A$. The set of all right congruences turns out to be the local state classifier of the topos $\Aset \coloneqq \PSh(\Sigma^*)$, which plays a central role in the ongoing theory of topoi of automata, in particular, in the topos-theoretic counterpart of the Nerode-congruences and syntactic monoids. Here, since $\Aset$ is a presheaf topos, its local state classifier can be described explicitly. 

However, in order to capture finiteness related to algebraic language theory, we need to consider the topoi of topological—in many cases, profinite—monoid actions (see \cite{hora2024topoi}) and their local state classifier.
This is why the main theorem of the present paper is useful, since every topos of topological monoid actions is a hyperconnected quotient of a monoid action topos, as studied in \cite{rogers2023toposes}. In Section \ref{sec:MotivationgExample}, we will briefly observe how our theory of the normalization operator is related to the combinatorics of words.


\subsection*{Acknowledgement}

The author would like to thank his supervisor, Ryu Hasegawa, for his continuous support and suggestions. He is also grateful to Matias Menni for his discussion on the notion of the local state classifier and to the members of the category theory reading group at RIMS.
He was supported by JSPS KAKENHI Grant Number JP24KJ0837 and the FoPM WINGS Program at the University of Tokyo.

The author would like to note that Theorem \ref{thm:MainTheorem} was also independently proven by Professor Peter T. Johnstone. He mentioned this result in his seminar talk on \cite{hora2024internal} at the \href{https://topos.institute/blog/2025-05-08-topox-seminar/}{TopOx seminar} in May 2025. The author is also grateful to him for our discussions on the local state classifier. This research was also supported by the Grothendieck Institute.

\section{Preliminaries on Hyperconnected quotients and local state classifier}\label{sec:Preliminaries}
This section is a $2$-page summary of the paper \cite{hora2024internal}, which defines and studies the notion of a local state classifier.

\subsection{Hyperconnected quotients}
This subsection aims to recall the preliminaries on hyperconnected geometric morphisms. See \cite{johnstone1981factorization} or \cite[][A.4.6]{johnstone2002sketchesv1} for more details.
\begin{definition}[Hyperconnected geometric morphisms]
    A geometric morphism $f\colon \E \to \F$ is said to be \demph{hyperconnected} if it is connected (i.e. $f^{\ast}\colon \F \to \E$ is fully faithful) and its counit $\epsilon_X\colon f^{\ast}f_{\ast}\to \id_{\E}$ is monic.
\end{definition}

In this paper, a \demph{hyperconnected quotiet} of a topos $\E$ means (an equivalence class of) a hyperconnected geometric morphism from $\E$.
Since $f^{\ast}$ is fully faithful for a hyperconnected quotient $f\colon \E \to \F$, we can regard $\F$ as a (replete) full subcategory of $\E$. 
With this identification, we will write `$X\in \ob(\E)$ belongs to $\F$' for `$X\in \ob(\E)$ belongs to the essential image of $f^{\ast}$' in this paper. This does not cause any serious problem, since we will not distinguish between two mutually equivalent hyperconnected quotients.

\subsection{Local state classifier}
In this subsection, we will briefly explain the notion of a local state classifier. For more proofs, informal explanations, and examples, see the original article \cite{hora2024internal}.

\subsubsection{Definition}
\begin{definition}[{\cite[][Definition 3.4]{hora2024internal}}]
    The \demph{local state classifier} of a category $\E$ is the colimit of all monomorphisms of $\E$, if it exists. In other words, it is an object $\Xi$ equipped with a family of morphisms $\{\xi_X \colon X \to \Xi\}_{X\in \ob(\E)}$, such that they form a colimit cocone under the faithful embedding functor $\E_{\mono}\rightarrowtail \E$.
\end{definition}
The definition of a local state classifier is quite transcendental, and even a (small-)cocomplete category might not admit a local state classifier. However, we can prove the following proposition:
\begin{proposition}[{\cite[][Section 3.16]{hora2024internal}}]\label{prop:ExistenceForGrothendieck}
    Every Grothendieck topos $\E$ has a local state classifier.
\end{proposition}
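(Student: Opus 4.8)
The plan is to realize $\Xi$ as an honest \emph{small} colimit, by cutting the large diagram $\E_{\mono}\rightarrowtail\E$ down to a set of objects and then checking that nothing is lost. Fix a small generating set $\mathcal{G}\subseteq\ob(\E)$, which exists since $\E$ is a Grothendieck topos. Because a Grothendieck topos is co-well-powered, the class $\mathcal{Q}$ of isomorphism classes of objects arising as a quotient of some $G\in\mathcal{G}$ is a genuine \emph{set}. Let $\mathcal{Q}_{\mono}$ denote the essentially small full subcategory of $\E_{\mono}$ on the objects of $\mathcal{Q}$, and set
\[
\Xi \;\coloneqq\; \colim_{Q\in\mathcal{Q}_{\mono}} Q,
\]
which exists because $\E$ is cocomplete and $\mathcal{Q}_{\mono}$ is small. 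Writing $\xi_Q\colon Q\to\Xi$ for the colimit cocone, the task is to extend this to a cocone $\{\xi_X\colon X\to\Xi\}_{X\in\ob(\E)}$ under all of $\E_{\mono}$ and to show the extended cocone is universal.

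The value $\xi_X$ for an arbitrary $X$ is forced by image factorization. For any $G\in\mathcal{G}$ and any $g\colon G\to X$, factor $g$ as $G\twoheadrightarrow\Image(g)\rightarrowtail X$; here $\Image(g)\in\mathcal{Q}$, so $\xi_{\Image(g)}$ is already defined, and any cocone under $\E_{\mono}$ must satisfy $\xi_X\circ g=\xi_{\Image(g)}\circ(G\twoheadrightarrow\Image(g))$. Since $\mathcal{G}$ generates, the maps $g\colon G\to X$ are jointly epic, so this prescription determines $\xi_X$ uniquely \emph{provided it is well defined}. To establish well-definedness I would use that $\E$, being a topos, is exact with effective epimorphisms: present $X$ as the quotient of $\coprod_{g}G$ by an equivalence relation that is again covered by generators, and check that agreement on the kernel pair forces the candidate arrows to glue into a single morphism $\xi_X$. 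The cocone condition $\xi_Y\circ m=\xi_X$ for a mono $m\colon X\rightarrowtail Y$ then follows, since $m$ restricts to an isomorphism $\Image(g)\xrightarrow{\sim}\Image(m\circ g)$.

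Granting the extension, universality is formal. A cocone $\{f_X\colon X\to T\}$ under $\E_{\mono}$ restricts to a cocone under $\mathcal{Q}_{\mono}$, which factors uniquely through $\Xi$ by the defining colimit, while the same image-factorization identity shows that a map $\Xi\to T$ both determines and is determined by a full cocone, giving uniqueness. Equivalently, one may package the whole argument as the assertion that the functor $\E\to\Set$ sending $T$ to the set of mono-compatible cocones into $T$ is representable: the reduction to $\mathcal{Q}$ shows this functor is genuinely $\Set$-valued and is a small limit of representables, hence continuous, and representability then follows from the Special Adjoint Functor Theorem, using that $\E$ is complete, well-powered, and admits a small cogenerating set.

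The step I expect to be the real obstacle is the well-definedness of $\xi_X$ in the second paragraph, namely that $\xi_{\Image(g)}\circ(G\twoheadrightarrow\Image(g))$ is independent of the chosen generator-probe $g$ and that these values assemble into a single morphism out of $X$. This is precisely where the largeness of $\E_{\mono}$ is tamed and where the exactness of a Grothendieck topos must be invoked; everything else is either a size observation (co-well-poweredness makes $\mathcal{Q}$ a set, so $\Xi$ is a legitimate small colimit) or a formal consequence of the universal property.
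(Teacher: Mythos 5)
Your construction is essentially the argument behind the cited result in \cite[Section 3.16]{hora2024internal} (this paper only quotes the statement): cut $\E_{\mono}$ down to the small full subcategory $\mathcal{Q}_{\mono}$ of quotients of generators, take that colimit, and extend the cocone to all of $\E$ by image-factoring generator-probes. The well-definedness step you flag does go through, and for the reason you indicate: for a morphism $a\colon G''\to G$ of probes over $X$ one gets a monomorphism $\Image(ga)\rightarrowtail\Image(g)$ lying in $\mathcal{Q}_{\mono}$, so compatibility of the candidate values is exactly the cocone property over $\mathcal{Q}_{\mono}$, and descent along the effective-epimorphic cover $\coprod_{g}G\twoheadrightarrow X$ (available since a Giraud generating set yields such presentations) then assembles them into $\xi_X$.
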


\subsubsection{Inducing full subcategories}\label{sssec:InducedFullSub}
How is a local state classifier related to the classification of hyperconnected quotients? Since $\Xi$ is just an object of $\E$ and a hyperconnected quotient is a (very nice) subcategory of $\E$, they might seem unrelated. 

The answer is, in short, that we can construct a full subcategory of $\E$ from any subobject of $\Xi$.
Let $\E$ be a category with a local state classifier $\Xi$.
For any subobject $\iota_F\colon F \rightarrowtail\Xi$
, we can define a full subcategory $\E_F \hookrightarrow \E$ by
\begin{equation}\label{eq:FullSubCondition}
    X\in \ob(\E_F)
\iff
\begin{tikzcd}
    & F\ar[d, rightarrowtail, "\iota_F"]\\
    X\ar[r,"\xi_X"']\ar[ru, dashed, "\exists"]&\Xi.
\end{tikzcd}
\end{equation}
In other words, we define the full subcategory $\E_F$ of $\E$, specifying objects by
\[
\ob(\E_F) \coloneqq \{X\in \ob(\E)\mid \text{ the morphism $\xi_X$ factors through $F\rightarrowtail \Xi$}\}.
\]
In this paper,
for each object $X\in \ob(\E_F)$, we write $\xi_X^F\colon X \to F$ for the unique lift of $\xi_X$ along $\iota_F$
\[
\begin{tikzcd}
    & F\ar[d, rightarrowtail, "\iota_F"]\\
    X\ar[r,"\xi_X"']\ar[ru, "\xi_X^F"]&\Xi.
\end{tikzcd}
\]


\subsubsection{The order structure}
Although the definition of a local state classifier makes sense for any category, it behaves better in cartesian closed categories. First and foremost, in a cartesian closed category, the local state classifier acquires a canonical semilattice structure reflecting the cartesian structure of $\C$.
\begin{proposition}[{\cite[][Proposition 3.27]{hora2024internal}}]\label{prop:SemilatticeStructure}
    If a cartesian closed category (in particular, an elementary topos) $\E$ admits a local state classifier $\{\xi_X\colon X\to \Xi\}_{X\in \ob (\E)}$, there exists a unique internal $\land$-semilattice structure on $\Xi$ such that the diagram
    \[
    \begin{tikzcd}[column sep =5pt]
        &X_1\times \dots \times X_n \ar[ld, "(\xi_{X_1}) \times \dots \times (\xi_{X_n})"']\ar[rd, "\xi_{(X_1 \times \dots \times X_n)}"]&\\
        \Xi^n\ar[rr,"\land"']&&\Xi
    \end{tikzcd}
    \]
    commutes for any finite sequence of objects $X_1, \dots, X_n \in \ob(\E),\; n\geq 0$.
\end{proposition}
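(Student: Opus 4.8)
The plan is to use cartesian closedness to realize every finite power $\Xi^n$ as a colimit of $n$-fold products, and then to read off both semilattice operations from the universal property of these colimits, the defining diagram being essentially forced by the construction.

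First I would record the structural fact on which everything rests. Since $\E$ is cartesian closed, each functor $(-)\times Y$ is a left adjoint and therefore preserves the (transcendental) colimit defining $\Xi$. Iterating in each variable and using that colimits commute with colimits, $\Xi^{n}$ is the colimit of the diagram $(X_1,\dots,X_n)\mapsto X_1\times\dots\times X_n$ indexed by $\E_{\mono}^{\,n}$; here one uses the elementary fact that a finite product of monomorphisms is again a monomorphism, so this diagram genuinely lands in $\E_{\mono}^{\,n}$. The crucial consequence is that the colimit cocone of $\Xi^n$ is exactly the family $\{\xi_{X_1}\times\dots\times\xi_{X_n}\colon X_1\times\dots\times X_n\to\Xi^n\}$, and in particular these maps are jointly epimorphic.

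Next I would define the operations. The family $\{\xi_{X\times Y}\colon X\times Y\to\Xi\}_{(X,Y)}$ is a cocone under the $n=2$ diagram: for monomorphisms $m,m'$ the identity $\xi_{X_1\times Y_1}\circ(m\times m')=\xi_{X\times Y}$ is precisely the cocone property of $\{\xi_X\}$ applied to the monomorphism $m\times m'$. The universal property of $\Xi\times\Xi=\colim(X\times Y)$ then yields a unique $\land\colon\Xi\times\Xi\to\Xi$ with $\land\circ(\xi_X\times\xi_Y)=\xi_{X\times Y}$, which is the $n=2$ instance of the asserted diagram; the unit is forced to be $\top\coloneqq\xi_1\colon 1\to\Xi$, the $n=0$ instance. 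To verify the $\land$-semilattice axioms, I would note that each is an equality of maps out of some $\Xi^{k}$, so by joint epimorphicity it suffices to check it after precomposition with $\xi_{X_1}\times\dots\times\xi_{X_k}$. Each such check reduces, via the defining relation for $\land$, to the cocone-compatibility of $\{\xi_X\}$ along a structural morphism that happens to be a monomorphism: commutativity uses the symmetry iso $X\times Y\cong Y\times X$, associativity the associator, the unit law the canonical iso $1\times X\cong X$, and idempotency uses that the diagonal $\Delta_X\colon X\to X\times X$ is a split (hence genuine) monomorphism, giving $\xi_{X\times X}\circ\Delta_X=\xi_X$. This last point is exactly where restricting the colimit to monomorphisms pays off, and is the only genuinely ``semilattice'' input. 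The general $n$-ary diagram then follows by induction from the $n=0,2$ cases together with associativity.

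For uniqueness, an internal $\land$-semilattice structure is determined by $\top$ and $\land$, and the $n=0$ and $n=2$ diagrams force $\top=\xi_1$ and $\land\circ(\xi_X\times\xi_Y)=\xi_{X\times Y}$; joint epimorphicity of $\{\xi_X\times\xi_Y\}$ then pins $\land$ down uniquely. I expect the main obstacle to be the first step: establishing cleanly that $\Xi^{n}$ is the colimit of $n$-fold products with cocone $\xi_{X_1}\times\dots\times\xi_{X_n}$, i.e. that finite products distribute over this colimit. Everything downstream is a jointly-epimorphic diagram chase, but it all rests on this preservation property, which is precisely what makes the cartesian-closedness hypothesis (rather than mere cocompleteness) indispensable.
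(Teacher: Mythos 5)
This proposition is recalled from \cite{hora2024internal} (Proposition 3.27) and is not proved in the present paper, so there is no in-paper proof to compare against. Your argument is correct and is the intended one: cartesian closedness makes $-\times Y$ colimit-preserving, so $\Xi^n$ is the (large) colimit of $n$-fold products with cocone $\xi_{X_1}\times\dots\times\xi_{X_n}$, the operations $\top=\xi_1$ and $\land$ are induced from the cocones $\{\xi_{X_1\times\dots\times X_n}\}$ (which are cocones because finite products of monos are monos), and the axioms---idempotency via the split monomorphism $\Delta_X$ in particular---follow by joint epimorphicity; this is exactly the reading of the defining diagram that the present paper itself relies on later, e.g.\ in the proof of Proposition \ref{prop:NormalizationLemma}.
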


This internal semilattice structure on $\Xi$ induces a (usual) semilattice structure on each homset $\E(X,\Xi)$ for each object $X\in \ob(\E)$. Therefore, each homset $\E(X, \Xi)$ admits a natural partial order defined by $f\leq g \iff f\land g =f$.
A subobject $\iota_F \colon F \rightarrowtail \Xi$ is said to be an \demph{internal filter}, if each subset $\E(X,F) \rightarrowtail \E(X,\Xi)$ is a filter in the usual sense (i.e., upward closed and closed under finite meets $\top, \land$). (In \cite{hora2024internal}, the author adopts a diagrammatic definition of an internal filter so that it makes sense even for locally large categories.)

\subsubsection{The classification theorem}
The paper \cite{hora2024internal} proves that,
if the category $\E$ is an elementary topos
and the subobject $F\rightarrowtail\Xi$ is 
an internal filter,
the induced full subcategory $\E_F$ is also an elementary topos, and the embedding $\E_F \hookrightarrow \E$ admits a right adjoint defining
a hyperconnected geometric morphism $f_F\colon \E \to \E_F$.
The main theorem of \cite{hora2024internal} (Theorem \ref{thm:OldMainTheorem}) states that
this construction $F \mapsto \E_F$ provides a bijective correspondence between the internal filters of $\Xi$ and the hyperconnected quotients of $\E$. 
\begin{theorem}[{\cite[][Theorem 4.1]{hora2024internal}\footnote{In \cite{hora2024internal}, another correspondant, internal semilattice homomorphisms $\Xi \to \Omega$, is given.}}]\label{thm:OldMainTheorem}
    If an elementary topos $\E$ has a local state classifier $\Xi$,
    there exists a bijective correspondence between 
    \begin{itemize}
        \item hyperconnected quotients of the topos $\E$, and
        \item internal filters of the local state classifier $\Xi$.
    \end{itemize}
\end{theorem}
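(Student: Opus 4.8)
The plan is to realize the correspondence as a pair of mutually inverse assignments. The assignment $F \mapsto \E_F$ from an internal filter to a full subcategory is already fixed by the factorization condition of Subsection~\ref{sssec:InducedFullSub}; the content to be supplied in this direction is that $\E_F$ is an elementary topos and that the inclusion $\E_F \hookrightarrow \E$ is reflective with the resulting geometric morphism $f_F$ hyperconnected. In the opposite direction I would send a hyperconnected quotient $f\colon \E \to \F$, identified through the fully faithful $f^{\ast}$ with a replete full subcategory $\F \subseteq \E$, to the subobject
\[
F_\F \;\coloneqq\; \bigvee_{X \in \ob(\F)} \Image(\xi_X) \;\rightarrowtail\; \Xi ,
\]
the join in $\Sub(\Xi)$ of the images of those cocone legs whose source belongs to $\F$. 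Equivalently, one may take the image of the comparison map $f^{\ast}\Xi_\F \to \Xi$ out of the local state classifier of $\F$, since $f^{\ast}$ preserves both monomorphisms and colimits.

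First I would verify that $F_\F$ is an internal filter, drawing on the standard characterization \cite[][A.4.6]{johnstone2002sketchesv1} that the essential image of the inverse image of a hyperconnected geometric morphism contains the terminal object and is closed under finite products, subobjects, and quotients. Taking $n = 0$ in Proposition~\ref{prop:SemilatticeStructure} shows $\Image(\xi_1) = \{\top\} \subseteq F_\F$, so each $\E(X, F_\F)$ contains the top element; closure of $\F$ under binary products, together with the case $n = 2$ of the same proposition, which identifies $\xi_{X \times Y}$ with the meet of $\xi_X$ and $\xi_Y$, yields closure under $\land$; and closure of $\F$ under subobjects and quotients translates, through the dictionary between the semilattice order on $\E(X,\Xi)$ and subobjects of the source, into upward closure of each $\E(X, F_\F) \subseteq \E(X, \Xi)$. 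Establishing this dictionary cleanly is a lemma I would isolate first.

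The correspondence is then settled by two round-trips, both of which rest on the key lemma that for every $X \in \ob(\E)$ the leg $\xi_X$ factors through $F_\F$ if and only if $X \in \ob(\F)$. The \dq{if} direction is immediate from the definition of $F_\F$ as a join containing $\Image(\xi_X)$, and it gives $\E_{F_\F} = \F$ at once. The \dq{only if} direction is the crux: from a factorization of $\xi_X$ through $F_\F$ one must reconstruct $X$ as a subquotient of objects of $\F$, which I would do by exploiting the universal property of $\Xi$ as the colimit of all monomorphisms, pulling the factorization back along the image factorizations of the legs $\xi_Y$ with $Y \in \F$ that generate $F_\F$. For the reverse round-trip $F_{\E_F} = F$, the inclusion $F_{\E_F} \subseteq F$ is formal, while the reverse inclusion asks that $F$ be covered by the images of those objects whose legs land in $F$; this is exactly where the \emph{filter} hypothesis on $F$, rather than that of an arbitrary subobject, is indispensable.

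I expect the principal obstacle to lie in the forward direction: showing that for an arbitrary internal filter $F$ the subcategory $\E_F$ is closed under the topos operations and that the reflection $f_F$ is genuinely hyperconnected, i.e.\ constructing the right adjoint to $\E_F \hookrightarrow \E$ and checking that its counit is monic. This is the step in which the filter axioms must be used in full force and in which the left-exactness of the reflection has to be reconciled with the semilattice structure on $\Xi$. The round-trip lemma above is the second delicate point, since controlling how an arbitrary object decomposes through the colimit cocone of all monomorphisms is precisely the phenomenon that the present paper later analyzes through the self-referential leg $\xi_\Xi$.
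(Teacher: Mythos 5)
This theorem is not proved in the present paper: it is Theorem~4.1 of \cite{hora2024internal}, recalled here as background, so there is no in-paper proof to compare your attempt against. Judged on its own terms, your outline has a reasonable overall shape (the assignment $F \mapsto \E_F$ in one direction, extraction of a subobject of $\Xi$ from a replete subcategory in the other, and two round-trip verifications), but it contains one concrete gap and defers exactly the steps that carry all the weight.

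The concrete gap is the definition of the inverse assignment. You set $F_{\F} = \bigvee_{X\in\ob(\F)}\Image(\xi_X)$, a join indexed by the proper class $\ob(\F)$. The theorem is stated for an arbitrary elementary topos, where $\Sub_{\E}(\Xi)$ is a Heyting algebra but not a complete lattice; the paper is explicit about this obstruction in the proof of Lemma~\ref{lem:JointlyEpimorphic} (\dq{we cannot use the complete lattice structure of the subobject lattice of $\Xi$}) and works around it there with the Heyting implication. Your proposed substitute --- the image of a comparison map $f^{\ast}\Xi_{\F} \to \Xi$ --- presupposes that the quotient $\F$ possesses a local state classifier, which is not among the hypotheses and is in fact the content of the present paper's Theorem~\ref{thm:MainTheorem}; invoking it here would be circular. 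A definition avoiding both problems is needed, e.g.\ via the classifying map of the counit subobject or the internal semilattice homomorphisms $\Xi \to \Omega$ mentioned in the footnote to the theorem statement. Beyond that, the two points you yourself flag as \dq{the principal obstacle} and \dq{the crux} --- that $\E_F$ is an elementary topos whose inclusion admits a right adjoint with monic counit, and that $\xi_X$ factoring through $F_{\F}$ forces $X\in\ob(\F)$ --- constitute essentially the entire substance of the theorem, and neither is argued; as it stands the proposal is a plan of attack rather than a proof. (A minor slip: the inclusion $\E_F\hookrightarrow\E$ is the inverse image $f^{\ast}$ and the right adjoint is $f_{\ast}$, so the subcategory is coreflective, not reflective.)
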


    

    The paper \cite{hora2024internal} also provides the description of the corresponding lex comonad $\G \coloneqq f^{*}f_* \colon \E \to \E$ with its counit $\epsilon \colon \G \to \id_{\E}$.
    \[
    \begin{tikzcd}
        {\;}\ar[rr,phantom, ""'{name=F}]& \E_F \ar[rd,"f^*"]&{\;} \\
        \E \ar[ru,"f_*"]\ar[rr, "\G", ""'{name=U}]\ar[rr, bend right =50, "\id_\E"', ""{name=W}]& & \E
        \ar[to=U, from=F, phantom, "\rotatebox{90}{$\coloneqq$}"]
        \ar[to=W, from=U, Rightarrow, "\epsilon"]
    \end{tikzcd}
    \]
    It states that 
    the monic counit map $\epsilon_X \colon \G X \rightarrowtail X$ for each object $X\in \ob(\E)$ is given by the pullback diagram
    \begin{equation}\label{eq:PullbackDescriptionOfTheCounitAndComonad}
        \begin{tikzcd}
        \G X\ar[r, "\xi^F_{\G X}"]\ar[d, "\epsilon_X", tail]\ar[rd, phantom, "\lrcorner", very near start]& F\ar[d,tail, "\iota_F"]\\
        X\ar[r , "\xi_X"']& \Xi.
    \end{tikzcd}
    \end{equation}

\section{The self-referential aspect of the local state classifier}
In this section, we will state the main theorem without proof in order to motivate the following sections. Then, we will explain the \dq{self-referential} aspect of the local state classifier.

\begin{theorem}\label{thm:MainTheorem}
Let $\E$ be an elementary topos with a local state classifier $\Xi$, and $F \rightarrowtail \Xi$ be an internal filter.
    Then,
    the family of morphisms 
    $\{\xi_{Z}^F \colon Z \to F\}_{Z\in \ob(\E_F)}$ is a local state classifier of the induced hyperconnected quotient topos $\E_{F}$.
\end{theorem}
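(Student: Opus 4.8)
The plan is to verify directly that $(F,\{\xi^F_Z\}_{Z\in\ob(\E_F)})$ is a colimit cocone for the inclusion $(\E_F)_{\mono}\hookrightarrow\E_F$, in four moves: that it is a cocone, that $F$ is genuinely an object of $\E_F$, and then the uniqueness and existence halves of the universal property. The cocone condition is immediate: for a monomorphism $m\colon Z\rightarrowtail W$ in $\E_F$ the original cocone gives $\xi_W\circ m=\xi_Z$, and postcomposing the lifts with $\iota_F$ rewrites this as $\iota_F\circ\xi^F_W\circ m=\iota_F\circ\xi^F_Z$, so cancelling the mono $\iota_F$ yields $\xi^F_W\circ m=\xi^F_Z$.

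The pivotal preliminary step is to show $F\in\ob(\E_F)$, i.e. that $\xi_F$ factors through $\iota_F$; this is where the self-referential structure enters. Applying the cocone condition to the monomorphism $\iota_F\colon F\rightarrowtail\Xi$ itself gives $\xi_F=\xi_\Xi\circ\iota_F$, so $\xi_F$ is exactly the normalization operator evaluated on the filter. The generalized inequality $H\subseteq\Nor_G(H)$ (Proposition \ref{prop:NormalizationLemma}), which in this instance reads $\iota_F\le\xi_\Xi\circ\iota_F$ in the semilattice $\E(F,\Xi)$, then gives $\iota_F\le\xi_F$; since $\iota_F$ trivially factors through $F$ and $F$ is an internal filter (hence $\E(F,F)$ is upward closed in $\E(F,\Xi)$), we conclude that $\xi_F$ factors through $F$. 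Thus $F\in\ob(\E_F)$ and $\xi^F_F$ is the normalization operator of $\E_F$. This is the step in which the normalization lemma is used essentially.

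For uniqueness I would show that $\{\xi^F_Z\}_{Z\in\ob(\E_F)}$ is jointly epimorphic onto $F$. For any $X\in\ob(\E)$ the object $\G X$ lies in $\E_F$, and by \eqref{eq:PullbackDescriptionOfTheCounitAndComonad} the lift $\xi^F_{\G X}$ is the pullback of $\xi_X$ along $\iota_F$; since images are stable under pullback in a topos, $\Image(\xi^F_{\G X})=\Image(\xi_X)\wedge F$ as subobjects of $\Xi$. If a subobject $S\rightarrowtail F$ contains every $\Image(\xi^F_{\G X})$, then $\Image(\xi_X)\wedge F\le S$, equivalently $\Image(\xi_X)\le(F\Rightarrow S)$ for the Heyting implication in $\Sub(\Xi)$; because $\{\xi_X\}_{X\in\ob(\E)}$ is the colimit cocone defining $\Xi$ and hence jointly epimorphic, this forces $(F\Rightarrow S)=\Xi$, i.e. $F\le S$, so $S=F$. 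This argument uses only the Heyting structure of $\Sub(\Xi)$ and therefore remains valid in an arbitrary elementary topos.

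Finally, for existence, given a cocone $\{c_Z\colon Z\to Y\}_{Z\in\ob(\E_F)}$ I must construct $u\colon F\to Y$ with $u\circ\xi^F_Z=c_Z$. Since $\E_F$ is closed under subobjects and under finite limits (as $f^{\ast}$ preserves them) and jointly epimorphic families in a topos are effective epimorphic, it suffices to check that the $c_Z$ are compatible over the pullbacks $Z\times_F Z'$, i.e. that $c_Z\circ x=c_{Z'}\circ x'$ whenever $\xi^F_Z\circ x=\xi^F_{Z'}\circ x'$; using the epi–mono factorization, every generalized element factors through its image, which again lies in $\E_F$, so this reduces to the assertion that the value of the cocone on a generalized element depends only on its image under $\xi^F$. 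I expect this compatibility to be the main obstacle: it is exactly the point at which the concrete description of the local state classifier cocone from \cite{hora2024internal} — that $\xi_Z$ records the subquotient an element generates — is needed, so that two elements with equal $\xi^F$-value generate isomorphic subquotients on which the monomorphism-cocone $\{c_Z\}$ must agree. Granting this, effective descent produces the required $u$, and together with the uniqueness above this establishes the universal property and hence the theorem.
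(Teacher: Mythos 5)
Your first three steps --- the cocone condition, the self-referential fact $F\in\ob(\E_F)$ via the normalization lemma applied to $\iota_F\le \xi_\Xi\circ\iota_F=\xi_F$, and the uniqueness half via $\Image(\xi^F_{\G X})=\Image(\xi_X)\wedge F$ and the Heyting implication $F\mathbin{\rightarrow}S$ in $\Sub_{\E}(\Xi)$ --- coincide with the paper's argument (Corollary \ref{cor:FilterLivesInHQuotient}, Lemma \ref{lem:BeingCocone}, Lemma \ref{lem:JointlyEpimorphic}) and are correct.

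The existence half, however, has a genuine gap, and it sits exactly where you say you ``expect the main obstacle'' to be. First, the compatibility condition --- that $c_Z\circ x=c_{Z'}\circ x'$ whenever $\xi^F_Z\circ x=\xi^F_{Z'}\circ x'$ --- is asserted, not proved, and your proposed justification appeals to the explicit description of $\xi_Z$ as ``recording the subquotient an element generates.'' That description is specific to presheaf topoi; the theorem is stated for an arbitrary elementary topos with a local state classifier, where no such pointwise description of the colimit is available (indeed the point of the theorem is to compute $\Xi$ for non-presheaf topoi). Knowing that $\Xi$ is a colimit tells you that zigzags of monomorphisms are identified by the $\xi$'s; it does not tell you, conversely, that two generalized elements with equal $\xi^F$-value are connected by such a zigzag, which is what your compatibility claim needs. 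Second, even granting compatibility, gluing a map $F\to Y$ out of it requires the family $\{\xi^F_Z\}$ to be \emph{effective} (strictly) epimorphic, not merely jointly epimorphic; for a large family in an elementary topos without infinite coproducts, joint epimorphy does not yield descent. The paper avoids both problems by going the other way: it \emph{extends} the given cocone $\{\phi_Z\colon Z\to L\}$ to a cocone $\{\psi_X\colon X\to PL\}_{X\in\ob(\E)}$ under all of $\E_{\mono}\to\E$, setting $\psi_X=\exists_{\phi_{\G X}}\circ\epsilon_X^{-1}\circ\sgt_X$ (Lemma \ref{lem:ExtensionLemma}), invokes the universality of $\Xi$ to obtain $\gamma\colon\Xi\to PL$, and then shows that $\gamma\circ\iota_F$ factors through the singleton monomorphism $\sgt_L\colon L\rightarrowtail PL$ using the classifying map $\chi_L$ and the joint epimorphy already established. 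To repair your proof you would need to replace the descent step with an argument of this kind (or restrict to Grothendieck topoi and still supply a real proof of the compatibility).
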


Notice that the above theorem implicitly states that the filter $F$ belongs to the 
full subcategory
$\E_F$. 
This \dq{self-referential} phenomenon $F \in \ob(\E_F)$ is not trivial. In fact, without the assumption that $F$ is a filter, there are many counter-examples. 
\begin{example}[The topos of graphs: {[Not being a loop] is a loop.} (1/2)] \label{exmp:ToposOfGraphsOne}
    Let us consider the topos of directed graphs $\E \coloneqq \PSh(\rightrightarrows)$. As explained in \cite[][Toy Example 5.3]{hora2024internal}, its local state classifier $\Xi$ looks like
    \[
    \Xi = \left (
    \begin{tikzcd}[scale=3]
    \bullet\ar[loop left,"\text{[Being a loop]}"]\ar[loop right,"\text{[Not being a loop]}"]
    \end{tikzcd}
    \right ).
    \]
    For a directed graph $X = (s,t\colon E \rightrightarrows V)$ in $\E$, the graph morphism $\xi_X$ sends every vertex to the unique vertex of $\Xi$, and sends each edge $e\in E$ to either [Being a loop] or [Not being a loop] detecting whether the edge $e$ is a loop or not.

    Let us consider a subgraph
    \[
    F = \left (
    \begin{tikzcd}[scale=3]
    \bullet\ar[loop right,"\text{[Not being a loop]}"]
    \end{tikzcd}
    \right ),
    \]
    which is not an internal filter. Then the induced full subcategory $\E_F$ consists of graphs whose edges are not loops (i.e., $s(e) \neq t(e)$ for any $e\in E$). Obviously, $F$ itself does not belongs to the subcategory, since \textbf{the edge [Not being a loop] is a loop!} Thus we obtain an example of the situation $F\notin \ob(\E_F)$.
\end{example}






The main subject in the next section, \demph{the normalization operator,} precisely describes such a \dq{self-referential aspect} of the local state classifier (see also Example \ref{exmp:ToposOfDirectedGraphTwo}). By using it,
Corollary \ref{cor:FilterLivesInHQuotient} shows that the condition $F \in \ob(\E_F)$ holds for any upward closed $F$.
\revmemo{cite Kit}

\section{Normalization operator in a category with a local state classifier}
The aim of this section is to define and study what we call the normalization operator of a category.

\subsection{Definition and examples}
\begin{definition}[Normalization operator]\label{def:NormalizationOperator}
    For a category $\E$ that admits a local state classifier $\Xi$, \demph{the normalization operator} 
    \[\xi_{\Xi}\colon \Xi \to \Xi\]
    is the component of the colimit cocone $\{\xi_X \colon X \to\Xi\}_{X\in \ob(\E)}$ at the object $\Xi$.
\end{definition}

\begin{example}[The topos of group actions]\label{exmp:CaseOfGroup}
    This terminology is inspired by the case of group action topos (see \cite[][Example 3.10]{hora2024internal} for details).
In the topos of right $G$-actions $\PSh(G)$ for a group $G$, the local state classifier $\Xi$ is the set of all subgroups equipped with the right conjugate action
\[
H\cdot g \coloneqq g^{-1}Hg \text{ in }\Xi.
\]
Each component of the colimit cocone $\{\xi_X \colon X \to\Xi\}_{X\in \ob(\E)}$
sends each element $x\in X$ of a $G$-set $X$ to its stabilizer subgroup.
    \[
    \xi_X(x) = \{g\in G\mid x\cdot g = x\}
    \]
Therefore, the normalization operator $\xi_{\Xi} \colon \Xi \to \Xi$ sends a subgroup $H\in \Xi$ to its normalizer group $\Nor_{G}(H) \in \Xi$
\[
\xi_{\Xi}\colon H \mapsto \{g\in G \mid g^{-1}Hg= H\} = \Nor_G (H).
\]
\end{example}

\begin{example}[The topos of graphs: {[Not being a loop] is a loop.} (2/2)]\label{exmp:ToposOfDirectedGraphTwo}
    The normalization operator $\xi_\Xi \colon \Xi \to \Xi$ in the topos of directed graphs $\E = \PSh(\rightrightarrows)$ sends both of two loops [Being a loop] and [Not being a loop] in $\Xi$ 
    \[
    \Xi = \left (
    \begin{tikzcd}[scale=3]
    \bullet\ar[loop left,"\text{[Being a loop]}"]\ar[loop right,"\text{[Not being a loop]}"]
    \end{tikzcd}
    \right )
    \]
    to the edge [Being a loop].
    In particular, we have $\xi_{\Xi}(\text{[Not being a loop]}) = \text{[Being a loop]}$, which captures the self-referential statement \dq{the edge [Not being a loop] is a loop.}
\end{example}

\begin{example}[Localic topos: trivial case]\label{exmp:localic}
    A Grothendieck topos $\E$ is localic if and only if its local state classifier $\Xi$ is a terminal object \cite[5.5. Corollary]{hora2024internal}. In such a localic case, including the sheaf topos $\E=\Sh(X)$ over a topological space $X$, the normalization operator $\xi_{\Xi}\colon \Xi \to \Xi$ trivially coincides with the identity morphism $\id_{\Xi}$.
\end{example}

\begin{remark}[Topoi with $\xi_{\Xi}=\id_{\Xi}$]\label{rmk:ToposWithIdentityOperator}
    The author does not know any topos-theiretic condition that the  normalization operator $\xi_{\Xi}$ coincides with the identity. In the group action topos $\PSh(G)$, the normalization operator cannot be the identity unless $G$ is trivial since $\xi_{\Xi}(\{e\})=G$.
While Example \ref{exmp:localic} implies that such a class of topoi includes all localic topoi, Example \ref{exmp:ToposOfIdempotents} shows that it is properly broader than localic topoi.
\end{remark}

\begin{example}[The topos of idempotent functions: $\xi_{\Xi}=\id_{\Xi}$]\label{exmp:ToposOfIdempotents}
Let us consider the topos of idempotent functions
    $\E\coloneqq \PSh(\langle x\mid x^2=x\rangle)$, which is equivalent to the topos of actions of the monoid $(\mathbb{F}_2,1,\times)$. An object of this topos is a pair $(X, \sigma\colon X \to X)$ of a set $X$ and an idempotent endofunction $\sigma^2=\sigma$.

    The local state classifier $\Xi$ of this topos is the $2$-element set
    \[
    \Xi = \{\text{[fixed]}, \text{[not fixed]}\}
    \]
    equipped with the idempotent morphism
    \[
    \begin{tikzcd}[row sep = 10pt]
        \text{[not fixed]} \ar[r, mapsto, "\sigma"]&\text{[fixed]}\\
        \text{[fixed]}\ar[r, mapsto, "\sigma"]&\text{[fixed]}.
    \end{tikzcd}
    \]
    Each component $\xi_{(X, \sigma)}\colon X \to \Xi$ sends a fixed point $\sigma(x)=x$ to $\text{[fixed]}$ and a non-fixed point $\sigma(x)\neq x$ to $\text{[not fixed]}$. Therefore, $\xi_{\Xi}$ coincides with the identity function
    \[
    \xi_{\Xi}=\id_{\Xi}
    \]
    since $\text{[fixed]}$ is fixed and $\text{[not fixed]}$ is not fixed.
\end{example}

\begin{remark}[Topoi with $\xi_{\Xi}=\top$]\label{exmp:ToposWithTopOperator}
    As the hom set $\E(\Xi,\Xi)$ is a semilattice, we can consider the equation $\xi_{\Xi}=\top$, or equivalently, the condition that the normalization operator $\xi_{\Xi}$ factors through the map $\xi_{1}\colon 1 \rightarrowtail \Xi$. This condition holds for any localic topoi, and in fact, a topos is localic if and only if the two equations $\xi_{\Xi}=\id_{\Xi}$ and $\xi_{\Xi}=\top$ hold by the obvious reason (see Example \ref{exmp:localic} and Remark \ref{rmk:ToposWithIdentityOperator}). For a group $G$, the topos $\PSh(G)$ satisfies the condition $\xi_{\Xi}=\top$ if and only if the group $G$ is a \textit{Dedekind group}, i.e., every subgroup of $G$ is normal. For example, the topos $\PSh(Q_8)$ satisfies the equation $\xi_{\Xi}=\top$.
\end{remark}

While the examples we have seen tend to be idempotent, the normalization operator is usually not idempotent at all. 
\begin{example}[The topos of species: Non-idempotent normalization operator]\label{exmp:DihedralGroup}
For the $4$th Dihedral group $D_4 \coloneqq \langle \sigma, \tau\mid \sigma^4=1, \tau^2=1, \tau\sigma = \sigma^3\tau\rangle$, the normalization operator $\xi_{\Xi}$ in $\PSh(D_4)$, which coincides with the group-theoretic one, is visualized in the following diagram.
\[\begin{tikzcd}
	&& {D_4} \\
	& {\langle\tau, \sigma^2\rangle} & {\langle \sigma\rangle} & {\langle\sigma\tau, \sigma^2\rangle} \\
	{\langle\tau\rangle} & {\langle\sigma^2\tau\rangle} & {\langle\sigma^2\rangle} & {\langle\sigma\tau\rangle} & {\langle\sigma^3 \tau\rangle} \\
	&& {\langle\rangle}
	\arrow[color={rgb,255:red,92;green,92;blue,214}, squiggly, from=1-3, to=1-3, loop, in=55, out=125, distance=10mm]
	\arrow[no head, from=2-2, to=1-3]
	\arrow[color={rgb,255:red,92;green,92;blue,214}, curve={height=-6pt}, squiggly, from=2-2, to=1-3]
	\arrow[no head, from=2-3, to=1-3]
	\arrow[color={rgb,255:red,92;green,92;blue,214}, curve={height=6pt}, squiggly, from=2-3, to=1-3]
	\arrow[no head, from=2-4, to=1-3]
	\arrow[color={rgb,255:red,92;green,92;blue,214}, curve={height=6pt}, squiggly, from=2-4, to=1-3]
	\arrow[no head, from=3-1, to=2-2]
	\arrow[color={rgb,255:red,92;green,92;blue,214}, curve={height=-6pt}, squiggly, from=3-1, to=2-2]
	\arrow[no head, from=3-2, to=2-2]
	\arrow[color={rgb,255:red,92;green,92;blue,214}, curve={height=-6pt}, squiggly, from=3-2, to=2-2]
	\arrow[color={rgb,255:red,92;green,92;blue,214}, curve={height=-6pt}, squiggly, from=3-3, to=1-3]
	\arrow[no head, from=3-3, to=2-2]
	\arrow[no head, from=3-3, to=2-3]
	\arrow[no head, from=3-3, to=2-4]
	\arrow[no head, from=3-4, to=2-4]
	\arrow[color={rgb,255:red,92;green,92;blue,214}, curve={height=6pt}, squiggly, from=3-4, to=2-4]
	\arrow[no head, from=3-5, to=2-4]
	\arrow[color={rgb,255:red,92;green,92;blue,214}, curve={height=6pt}, squiggly, from=3-5, to=2-4]
	\arrow[color={rgb,255:red,92;green,92;blue,214}, curve={height=18pt}, squiggly, from=4-3, to=1-3]
	\arrow[no head, from=4-3, to=3-1]
	\arrow[no head, from=4-3, to=3-2]
	\arrow[no head, from=4-3, to=3-3]
	\arrow[no head, from=4-3, to=3-4]
	\arrow[no head, from=4-3, to=3-5]
\end{tikzcd}
\]
Therefore, $\xi_{\Xi}$ is not idempotent nor order-preserving. The same argument works for $\PSh(S_4)$ and hence for the topos of species $\PSh(\FinSet_{\mathrm{bij}})$ (see \cite{joyal1981theorie} and \cite[Example. 3.14]{hora2024internal}). This implies that the normalization operator in the topos of species $\PSh(\FinSet_{\mathrm{bij}})$ is not idempotent.
\end{example}

\subsection{The normalization lemma}
At first glance, the normalization operator has nothing to do with the order structure ($=$ the semilattice structure) of $\Xi$. 
In fact, it does not preserve the order structure in general (see Example \ref{exmp:DihedralGroup}).

However, there is an obvious inclusion relation 
\[H \subset \Nor_G(H)\] for the normalizer of a subgroup $H \subset G$, which can be categorically generalized.
\begin{proposition}[Normalization lemma]
\label{prop:NormalizationLemma}
In a cartesian closed category $\E$ with a local state classifier $\Xi$, the morphism $\xi_{\Xi}\colon \Xi\to \Xi$ is equal to or larger than $\id_{\Xi}$
\[
\begin{tikzcd}[column sep=50pt]
    \Xi\ar[r, bend left, ""'{name=A}, "\id_\Xi"]\ar[r, bend right, "\xi_\Xi"', ""{name=B}] \ar[from=A, to=B, phantom, "\rotatebox{90}{$\geq$}"] &\Xi
\end{tikzcd}
\]
with respect to the $\land$-semilattice structure on $\E(\Xi, \Xi)$.
\end{proposition}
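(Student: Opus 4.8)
The plan is to unwind the order relation and reduce the claim to the defining universal property of $\Xi$. By definition of the partial order on $\E(\Xi,\Xi)$, the asserted inequality $\id_\Xi \leq \xi_\Xi$ is exactly the equation $\id_\Xi \land \xi_\Xi = \id_\Xi$, where $\land$ denotes the meet induced by the internal semilattice structure of Proposition \ref{prop:SemilatticeStructure}. Both sides are endomorphisms of $\Xi$, i.e.\ morphisms out of the colimit $\Xi$. Since every colimit cocone is jointly epimorphic, the family $\{\xi_X \colon X \to \Xi\}_{X \in \ob(\E)}$ is jointly epic, so it suffices to prove the equality after precomposing with each $\xi_X$; that is, I would show $(\id_\Xi \land \xi_\Xi)\circ \xi_X = \xi_X$ for every object $X$.

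Next I would carry out the computation on each component. Precomposition with a fixed morphism is a semilattice homomorphism on hom-sets (because $f \land g = \land \circ \langle f,g\rangle$ and $\langle f,g\rangle \circ h = \langle f\circ h, g\circ h\rangle$), so the left-hand side rewrites as $\xi_X \land (\xi_\Xi \circ \xi_X)$. Expanding the meet through the internal operation and factoring the pairing gives
\[
\xi_X \land (\xi_\Xi \circ \xi_X) = \land \circ \langle \xi_X, \xi_\Xi\circ\xi_X\rangle = \land \circ (\xi_X \times \xi_\Xi)\circ \langle \id_X, \xi_X\rangle = \xi_{X\times\Xi}\circ\langle\id_X,\xi_X\rangle,
\]
where the last equality is the instance $X_1 = X$, $X_2 = \Xi$ of Proposition \ref{prop:SemilatticeStructure}.

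It then remains to identify $\xi_{X\times\Xi}\circ\langle\id_X,\xi_X\rangle$ with $\xi_X$. The crucial observation is that the graph $\langle \id_X,\xi_X\rangle \colon X \to X\times\Xi$ is a split monomorphism, with retraction the projection $\pi_X$; hence the cocone compatibility of the local state classifier (namely $\xi_B \circ m = \xi_A$ for any monomorphism $m\colon A\rightarrowtail B$) applies and yields exactly $\xi_{X\times\Xi}\circ\langle\id_X,\xi_X\rangle = \xi_X$. Combining the three steps gives $(\id_\Xi \land \xi_\Xi)\circ\xi_X = \xi_X = \id_\Xi \circ \xi_X$ for all $X$, and joint epimorphicity then forces $\id_\Xi \land \xi_\Xi = \id_\Xi$.

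I expect the only genuine obstacle to be conceptual rather than computational: because $\Xi$ is a transcendental colimit, one cannot argue by chasing elements of $\Xi$, and the whole argument hinges on two structural moves — using joint epimorphicity to test the identity on the cocone components $\xi_X$, and recognizing that the relevant pairing $\langle\id_X,\xi_X\rangle$ is a (split) monomorphism so that the colimit's defining compatibility can be invoked. Once these two observations are in place, every remaining step is a formal manipulation of the semilattice structure and Proposition \ref{prop:SemilatticeStructure}.
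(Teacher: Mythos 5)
Your proposal is correct and follows essentially the same route as the paper's proof: reduce the inequality to the equation $\id_\Xi \land \xi_\Xi = \id_\Xi$, test it against the jointly epimorphic colimit cocone $\{\xi_X\}$, rewrite the composite via Proposition \ref{prop:SemilatticeStructure} as $\xi_{X\times\Xi}\circ\langle\id_X,\xi_X\rangle$, and conclude using the fact that the graph $\langle\id_X,\xi_X\rangle$ is a (split) monomorphism so the cocone compatibility applies. The paper packages these same steps into a single pasted commutative diagram, but the content is identical.
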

\begin{proof}
    To prove $\id_{\Xi}\leq \xi_{\Xi}$, we need to prove that the composite of
    \[
    \begin{tikzcd}[column sep = 50pt]
        \Xi\ar[r,"{\langle\id_{\Xi}, \xi_{\Xi}\rangle}"]&
        \Xi\times \Xi\ar[r,"\land"]&\Xi
    \end{tikzcd}        
    \]
    is the identity.
    Since $\Xi$ is a colimit, it suffices to prove the commutativity of the following diagram for each object $X\in \ob(\E)$.
     \[
    \begin{tikzcd}[column sep = 50pt]
     X\ar[d,"\xi_X"']\ar[rr,bend left, "\xi_X"]&&\Xi\ar[d,equal]\\
        \Xi\ar[r,"{\langle\id_{\Xi}, \xi_{\Xi}\rangle}"]&
        \Xi\times \Xi\ar[r,"\land"]&\Xi
    \end{tikzcd}        
    \]
    By the definition of $\land$ operation and the fact that ${\langle\id_X, \xi_X\rangle}$ is a (split) monomorphism, we have the next commutative diagram.
    \[
    \begin{tikzcd}[column sep = 50pt]
     X\ar[d,"\xi_X"']\ar[rr,bend left, "\xi_X"]\ar[r, "{\langle\id_X, \xi_X\rangle}", tail]&X\times \Xi\ar[d,"{\xi_X \times \xi_{\Xi}}"]\ar[r,"\xi_{X\times \Xi}"]&\Xi\ar[d,equal]\\
        \Xi\ar[r,"{\langle\id_{\Xi}, \xi_{\Xi}\rangle}"]&
        \Xi\times \Xi\ar[r,"\land"]&\Xi
    \end{tikzcd}        
    \]
    This completes the proof.
\end{proof}

\begin{corollary}
\label{cor:FilterLivesInHQuotient}
    For any cartesian closed category $\E$ with a local state classifier $\Xi$ and any internal filter (or, more generally, any upward closed subobject) $F\rightarrowtail \Xi$, we have 
    \[
    F \in \ob(\E_F),
    \]
    i.e., 
     $F$ belongs to the induced full subcategory $\E_F \hookrightarrow \E$.
\end{corollary}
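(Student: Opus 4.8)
The plan is to unwind the definition of $F \in \ob(\E_F)$ and then let the normalization lemma (Proposition \ref{prop:NormalizationLemma}) do all the work, using the upward-closedness hypothesis only in its mildest form. First I would recall what the statement asks for: by the description of $\E_F$ in Subsection \ref{sssec:InducedFullSub}, $F \in \ob(\E_F)$ means exactly that the cocone component $\xi_F \colon F \to \Xi$ factors through $\iota_F \colon F \rightarrowtail \Xi$, i.e. that $\xi_F$ lies in the image of the postcomposition map $\E(F,F) \rightarrowtail \E(F,\Xi)$. So the whole corollary reduces to producing one factorization.

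The key observation is a cocone-compatibility identity. Since $\{\xi_X\}_{X}$ is a colimit cocone under the embedding $\E_{\mono} \rightarrowtail \E$ and $\iota_F$ is a monomorphism, compatibility at the arrow $\iota_F$ gives $\xi_\Xi \circ \iota_F = \xi_F$. Now I would apply the normalization lemma, which asserts $\id_\Xi \leq \xi_\Xi$ in the semilattice $\E(\Xi,\Xi)$, and precompose the inequality with $\iota_F$. The point to verify here is that precomposition with any fixed morphism is order-preserving: because the meet on each homset is computed from the single internal operation $\land$ on $\Xi$ as in Proposition \ref{prop:SemilatticeStructure}, precomposition commutes with $\land$ and hence is a semilattice homomorphism, so it respects $\leq$. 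This yields $\iota_F = \id_\Xi \circ \iota_F \leq \xi_\Xi \circ \iota_F = \xi_F$, that is, $\iota_F \leq \xi_F$ in $\E(F,\Xi)$.

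Finally I would close the argument using upward-closedness. The morphism $\iota_F$ trivially factors through $F$, as $\iota_F = \iota_F \circ \id_F$, so it belongs to the subset $\E(F,F) \rightarrowtail \E(F,\Xi)$. Since $\iota_F \leq \xi_F$ and, by hypothesis, this subset is upward closed (which is all we need from the filter condition, and only at the single object $F$), it follows that $\xi_F$ also belongs to $\E(F,F)$, i.e. factors through $\iota_F$. This is precisely $F \in \ob(\E_F)$.

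I do not expect a genuine obstacle: there is no real computation, and the surprise is how little of the filter hypothesis is used. The only step that requires care is getting the direction of the inequality right and explicitly justifying that precomposition with $\iota_F$ is monotone; once $\iota_F \leq \xi_F$ is in hand, upward-closedness finishes the proof immediately.
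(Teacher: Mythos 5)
Your proposal is correct and follows essentially the same route as the paper's proof: the cocone-compatibility identity $\xi_\Xi \circ \iota_F = \xi_F$ at the monomorphism $\iota_F$, the normalization lemma $\id_\Xi \leq \xi_\Xi$ precomposed with $\iota_F$ to get $\iota_F \leq \xi_F$, and upward-closedness of $\E(F,F)\rightarrowtail\E(F,\Xi)$ to conclude. Your explicit justification that precomposition is monotone (because it commutes with the pointwise $\land$ from Proposition \ref{prop:SemilatticeStructure}) is a detail the paper leaves implicit, but the argument is identical in substance.
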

\begin{proof}

Due to the equivalence (\ref{eq:FullSubCondition}), it suffices to prove that $\xi_F \colon F \to \Xi$ lifts along $\iota_{F}\colon F \rightarrowtail \Xi$.
\[
F\in \ob(\E_F)
\iff
\begin{tikzcd}
    & F\ar[d, rightarrowtail, "\iota_{F}"]\\
    F\ar[r,"\xi_F"']\ar[ru, dashed, "\exists"]&\Xi
\end{tikzcd}
\]
Since $\iota_{F}$ trivially lifts along itself and the internal filter $F$ is upward closed, it is enough to prove the inequality $\iota_{F} \leq \xi_F$. This follows from the following diagram and the inequality of Proposition \ref{prop:NormalizationLemma}.
\[
\begin{tikzcd}[column sep=50pt]
    F \ar[r,"\iota_{F}", rightarrowtail]\ar[rr, bend right=50, "\xi_F"',""{name=C}]&\Xi\ar["\rotatebox{90}{$=$}", to={C}, phantom]\ar[r, bend left, ""'{name=A}, "\id_\Xi"]\ar[r, bend right, "\xi_\Xi"', ""{name=B}] \ar[from=A, to=B, phantom, "\rotatebox{90}{$\geq$}"] &\Xi
\end{tikzcd}
\]
    
\end{proof}

\section{Local state classifier in a hyperconnected quotient}
The goal of this section is to prove Theorem \ref{thm:MainTheorem}. 
Throughout this section, we fix the following data.
\begin{itemize}
    \item $\E$ is an elementary topos with a local state classifier $\Xi$.
    \item $F$ is an internal filter of $\Xi$.
    \item $\E_F$ is the corresponding hyperconnected quotient of $\E$.
    \item $\G$ is the corresponding (lex idempotent) comonad on $\E$ with the monic counit $\{\epsilon_X \colon \G X \rightarrowtail X\}_{X\in \ob(\E)}$
\end{itemize}
Due to Theorem \ref{thm:OldMainTheorem} and Proposition \ref{prop:ExistenceForGrothendieck}, this situation subsumes all hyperconnected geometric morphisms between Grothendieck topoi.

Due to Corollary \ref{cor:FilterLivesInHQuotient}, we know that the filter $F$ and all the components of the family $\{\xi_{Z}^F \colon Z \to F\}_{Z\in \ob(\E_F)}$ belong to the full subcategory $\E_F$. As the next lemma shows, it is not hard to prove that it is a cocone.

\begin{lemma}[Being a cocone]\label{lem:BeingCocone}
     The family $\{\xi_{Z}^F \colon Z \to F\}_{Z\in \ob(\E_F)}$ is a cocone under the functor ${(\E_F)}_{\mono} \to \E_F$.
\end{lemma}
\begin{proof}
    Let $m\colon Z\rightarrowtail Z'$ be an arbitrary monomorphism in the category $\E_F$. Since the embedding $\E_F \hookrightarrow \E$ preserves finite limits, $m$ remains monic in the ambient topos $\E$. Therefore, we have the commutativity of the outer perimeter of the following diagram
    \[
    \begin{tikzcd}
        Z\ar[rr,"m", rightarrowtail]\ar[rd, "\xi^F_{Z}"']\ar[rdd, "\xi_Z"', bend right]&&Z'\ar[ld, "\xi^F_{Z'}"] \ar[ldd, "\xi_Z'", bend left]\\
        &F\ar[d,rightarrowtail, "\iota_F"]&\\
        &\Xi.&
    \end{tikzcd}
    \]
    Since $\iota_F$ is monic, this implies the commutativity of the inner triangle $ \xi_{Z'}^F \circ m = \xi^F_{Z}$, which completes the proof.
\end{proof}

In the rest of this section, we will prove the universality of the family $\{\xi_{Z}^F \colon Z \to F\}_{Z\in \ob(\E_F)}$ as a colimit of the functor $(\E_F)_{\mono} \to \E_F$. What we can use is the fact that the cocone $\{\xi_X \colon X \to \Xi\}_{X\in \ob(\E)}$ is a (large) colimit cocone of $\E_{\mono}\to \E$. So we will convert the situations in $\E_F$ to the larger category $\E$ and reduce the required universality of $F \in \ob(\E_F)$ to that of $\Xi \in \ob(\E)$.

First, we will prove the uniqueness part of the universality of $F$. Recall that a (possibly large) family of morphisms $\{f_\lambda \colon X_\lambda \to Y\}_{\lambda \in \Lambda}$ is said to be \demph{jointly epimorphic} if, for any parallel morphisms $g,h\colon Y \rightrightarrows Z$, the implication
$
(\forall \lambda \in \Lambda,\;  g\circ f_\lambda = h\circ f_\lambda) \implies (g=h)
$ holds.
In the following proof, we will not assume that the topos $\E$ is a Grothendieck topos. So we cannot use the complete lattice structure of the subobject lattice of $\Xi$. Instead of that, we use the Heyting algebra structure, which makes sense in an arbitrary elementary topos.

\begin{lemma}[Universality (1/2): Uniqueness]
\label{lem:JointlyEpimorphic}
    The cocone $\{\xi_{Z}^F \colon Z \to F\}_{Z\in \ob(\E_F)}$ is jointly epimorphic (in $\E$, and hence in $\E_F$).
\end{lemma}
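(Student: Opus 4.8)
The plan is to reduce the joint epimorphicness to a single-subobject statement phrased with an equalizer, and then transport that statement from $F$ up to $\Xi$, where the cocone $\{\xi_X\}_{X\in\ob(\E)}$ is already known to be jointly epimorphic (being a colimit cocone). Concretely, given any parallel pair $g,h\colon F\rightrightarrows Y$ in $\E$ with $g\circ\xi_Z^F=h\circ\xi_Z^F$ for all $Z\in\ob(\E_F)$, I would form their equalizer $e\colon E\rightarrowtail F$. Every $\xi_Z^F$ then factors through $e$, and the goal becomes showing that $e$ is an isomorphism, i.e. $E=F$ in $\Sub(F)$. It matters here that $E$ is an equalizer, a finite limit of one chosen pair, rather than the union of the images of the whole family: since $\E$ is not assumed to be a Grothendieck topos, large unions of subobjects need not exist, and this framing avoids them entirely.

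The heart of the argument uses the comonad $\G$. For every $X\in\ob(\E)$ the object $\G X$ lies in $\ob(\E_F)$, and by the pullback description (\ref{eq:PullbackDescriptionOfTheCounitAndComonad}) the map $\xi_{\G X}^{F}\colon\G X\to F$ exhibits $\G X$ as the pullback $\xi_X^{\ast}F$. By hypothesis $\xi_{\G X}^{F}$ equalizes $g$ and $h$, hence factors through $e$, so pasting pullbacks identifies $\xi_X^{\ast}E=(\xi_{\G X}^{F})^{\ast}E$ with all of $\G X=\xi_X^{\ast}F$; that is, $\xi_X^{\ast}F\leq\xi_X^{\ast}E$ in $\Sub(X)$. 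I would then invoke the Heyting implication $(F\Rightarrow E)\rightarrowtail\Xi$ in $\Sub(\Xi)$: since the pullback functor $\xi_X^{\ast}$ preserves Heyting implication, and a map factors through a subobject exactly when its pullback is the top subobject, the inequality $\xi_X^{\ast}F\leq\xi_X^{\ast}E$ says precisely that $\xi_X$ factors through $(F\Rightarrow E)$. This is the step for which the Heyting-algebra structure of the subobject lattices — available in any elementary topos — is exactly what is needed.

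Finally, since every $\xi_X$ factors through the monomorphism $(F\Rightarrow E)\rightarrowtail\Xi$ and the family $\{\xi_X\}_{X\in\ob(\E)}$ is jointly epimorphic, this monomorphism must itself be epic, hence an isomorphism in the balanced category $\E$, so $F\Rightarrow E=\top$. This means $F\leq E$ in $\Sub(\Xi)$, and combined with $E\leq F$ it yields $E=F$, whence $g=h$; the conclusion in $\E_F$ follows at once because $\E_F\hookrightarrow\E$ is full. I expect the main obstacle to be the careful bookkeeping of the pullback-pasting identification $\xi_X^{\ast}E\cong(\xi_{\G X}^{F})^{\ast}E$ together with the verification that each $\xi_X^{\ast}$ is a homomorphism of Heyting algebras on subobjects; both are standard in topos theory, but they must be stated precisely, since they are what makes the transfer of information from $F$ up to $\Xi$ legitimate without any appeal to infinite joins.
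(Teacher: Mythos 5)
Your proposal is correct and follows essentially the same route as the paper: reduce to showing that a subobject of $F$ through which all $\xi_Z^F$ factor equals $F$, use the pullback square (\ref{eq:PullbackDescriptionOfTheCounitAndComonad}) to transfer the hypothesis to $\Sub_{\E}(\Xi)$, package it as the Heyting implication $(F\Rightarrow E)$, and conclude from the joint epimorphicity of $\{\xi_X\}_{X\in\ob(\E)}$. The only (immaterial) difference is that you phrase the key inequality via inverse images and preservation of $\Rightarrow$ by $\xi_X^{\ast}$, whereas the paper uses direct images $\Image(\xi_X)\land F=\Image(\xi^F_{\G X})$ and the adjunction $(-)\land F\dashv(F\rightarrow -)$; these are equivalent by Frobenius reciprocity.
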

\begin{proof}
    Take an arbitrary subobject $S\rightarrowtail F$ such that every arrow in the family $\{\xi^F_{Z} \colon Z \to F\}_{Z\in \ob(\E_F)}$ lifts along $S\rightarrowtail F$. Since $\E$ is a topos, it suffices to prove $S=F$ (see Lemma \ref{lem:JointlyEpimorphicFamilyAndSubobject}).
    The lifting assumption on $S$ is rephrased as
    the inequality 
\[
\Image(\xi^F_Z) \leq S \text{ in } \Sub_{\E}(\Xi)
\]
for every object $Z\in \ob(\E_F)$.
We also have the equality
    \[
    \Image(\xi_X)\land F = \Image(\xi^F_{\G X}) \text{ in } \Sub_{\E}(\Xi)
    \]
    for every object $X \in \ob(\E)$ due to the Beck-Chevalley condition 
    for the pullback square (\ref{eq:PullbackDescriptionOfTheCounitAndComonad})
    \[
    \begin{tikzcd}
        \G X\ar[r, "\xi_{\G X}^F"]\ar[d, "\epsilon_X", tail] \ar[rd, phantom, "\lrcorner", very near start]& F\ar[d,tail, "\iota_F"]\\
        X\ar[r , "\xi_X"']& \Xi.
    \end{tikzcd}
    \]
    Combining the above two (in)equalities in $\Sub_{\E}(\Xi)$, 
    we obtain 
    \[
    \Image(\xi_X)\land F  \leq S \text{ in } \Sub_{\E}(\Xi)
    \]
    for each object $X\in \ob(\E)$.
    Since the poset $\Sub_{\E}(\Xi)$ is a Heyting algebra,
    this is equivalent to 
    \[
    \Image(\xi_X) \leq (F \mathbin{\rightarrow}  S) \text{ in } \Sub_{\E}(\Xi).
    \] 
    This means that the colimit cocone $\{\xi_X\colon X \to \Xi\}_{X\in \ob(\E)}$ factors through the subobject $(F\to S) \in \Sub_{\E}(\Xi)$.
    Since the colimit cocone $\{\xi_X \colon X \to \Xi\}_{X \in \ob(\E)}$ is jointly epimorphic, Lemma \ref{lem:JointlyEpimorphicFamilyAndSubobject} implies
    \[
    \Xi = \top= (F  \mathbin{\rightarrow} S) \text{ in } \Sub_{\E}(\Xi),
    \]  
    i.e., $F\leq S$. Since $S\leq F$ holds by definition, this completes the proof. 
\end{proof}

Lastly, we need to prove the existence part of the universality of $F$, using the universality of $\Xi$. This is the tricky part, since in order to use the universality of $\Xi$, we need to construct a cocone under the functor $\E_{\mono} \to \E$ from a given cocone $\{\phi_Z\colon Z\to L\}_{Z \in \ob(\E_F)}$ under the smaller functor $(\E_F)_{\mono} \to \E_F$. In other words, we need to extend the index class of a cocone from $\ob(\E_F)$ to $\ob(\E)$.

The first idea for the extension problem is to use the counit $\epsilon_X \colon \G X \rightarrowtail X$. Since $\G X$ is an object of $\E_F$, we can canonically associate an object $\G X$ of $\E_F$ with each object $X$ of $\E$. But here is another problem. Although we want to construct a family of morphisms \textbf{from} all objects $X$ in $\E$, what the counit provides are morphisms \textbf{to} objects of $\E$. 
\[
\begin{tikzcd}
    \G X \ar[r,"\phi_{\G X}"]\ar[d, rightarrowtail, "\epsilon_X"]& L\\
    X\ar[ru, "?"', dashed] &
\end{tikzcd}
\]
To invert the direction of the arrow, we need to consider the next idea, namely using the powerset object. 
Recall that any morphism $f\colon X \to Y$ in a topos induces three morphisms between the powerset objects
\[
\begin{tikzcd}[column sep = 70pt]
    PX \ar[r,"\exists_f ", bend left] \ar[r, "\forall_f"', bend right ] & PY. \ar[l, "f^{-1}"']
\end{tikzcd}
\]
Combining these ideas, we obtain a cocone under $\E_{\mono} \to \E$ as follows
\[
\begin{tikzcd}
    \G X \ar[r,"\phi_{\G X}"]\ar[d, rightarrowtail, "\epsilon_X"]& L\\
    X\ar[ru, "?"', dashed] &
\end{tikzcd}
\xrightarrow{\text{taking power}}
\begin{tikzcd}
    P\G X \ar[r,"\exists_{\phi_{\G X}}"]& PL\\
    PX \ar[u,  "{\epsilon_X}^{-1}", twoheadrightarrow] &\\
    X\ar[u, "\sgt_X", rightarrowtail] \ar[ruu, "!"', dashed]
\end{tikzcd}
\]
Here, we use the morphism $\exists_{\phi_{\G X}}$, not $\forall_{\phi_{\G X}}$, because of its nicer properties, such as Lemma \ref{lem:sgtNaturality}.
In what follows, the notation $PX$ denotes the powerset object in the topos $\E$, not in the topos $\E_F$.
\begin{lemma}
\label{lem:ExtensionLemma}
    Let 
    $\{\phi_Z\colon Z\to L\}_{Z \in \ob(\E_F)}$ be a cocone under the diagram $(\E_F)_{\mono}\to \E_F$. Then, the family of morphisms
    \[
    \begin{tikzcd}
        \{\psi_X\colon X\ar[r,"\sgt_X"] &PX \ar[r,"{\epsilon_X}^{-1}"]&P\G X\ar[r,"\exists_{\phi_{\G X}}"] & PL\}_{X\in \ob(\E)}
    \end{tikzcd}
    \]
    defines a cocone under the diagram $\E_{\mono} \to \E$.
    Furthermore, the cocone $\psi$ is an extension of $\phi$ in the sense that the diagram
    \begin{equation}\label{eq:PsiExtendsPhi}
        \begin{tikzcd}
        \G X \ar[r, "\phi_{\G X}"]\ar[d, tail, "\epsilon_X"]& L\ar[d, "\sgt_L", tail]\\
        X\ar[r, "\psi_X"]& PL
    \end{tikzcd}
    \end{equation}
    commutes for every object $X \in \ob(\E)$.
\end{lemma}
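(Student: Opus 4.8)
The plan is to verify the two assertions independently, treating the extension property \eqref{eq:PsiExtendsPhi} as a warm-up and reserving the cocone condition for the main argument. Both reduce to formal manipulations of the three power-object operations, so I would first isolate the facts I intend to use: functoriality $\exists_{g}\circ\exists_{f}=\exists_{g\circ f}$; the naturality of the singleton map $\exists_{f}\circ\sgt_{A}=\sgt_{B}\circ f$, which is Lemma \ref{lem:sgtNaturality}; the cancellation $m^{-1}\circ\exists_{m}=\id$ for a monomorphism $m$; and the Beck--Chevalley condition, which for a pullback square with left leg $p$, top edge $g$, bottom edge $f$ and right leg $q$ reads $q^{-1}\circ\exists_{f}=\exists_{g}\circ p^{-1}$.

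For the extension property I would simply compute $\psi_{X}\circ\epsilon_{X}=\exists_{\phi_{\G X}}\circ\epsilon_{X}^{-1}\circ\sgt_{X}\circ\epsilon_{X}$, rewrite $\sgt_{X}\circ\epsilon_{X}=\exists_{\epsilon_{X}}\circ\sgt_{\G X}$ by Lemma \ref{lem:sgtNaturality}, cancel $\epsilon_{X}^{-1}\circ\exists_{\epsilon_{X}}=\id$ since $\epsilon_{X}$ is monic, and apply Lemma \ref{lem:sgtNaturality} once more to reach $\exists_{\phi_{\G X}}\circ\sgt_{\G X}=\sgt_{L}\circ\phi_{\G X}$. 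This is exactly the commutativity of \eqref{eq:PsiExtendsPhi}.

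The substance lies in the cocone condition: for every monomorphism $m\colon X\rightarrowtail X'$ of $\E$ I must establish $\psi_{X'}\circ m=\psi_{X}$. The key structural input — and what I expect to be the main obstacle — is that the counit naturality square
\[
\begin{tikzcd}
\G X\ar[r,"\G m"]\ar[d,"\epsilon_{X}"',tail]&\G X'\ar[d,"\epsilon_{X'}",tail]\\
X\ar[r,"m"',tail]&X'
\end{tikzcd}
\]
is a \emph{pullback}. This is not automatic, but it follows from the pullback description \eqref{eq:PullbackDescriptionOfTheCounitAndComonad} of the counit, which exhibits $\epsilon_{X}$ as the subobject $\xi_{X}^{-1}(F)\rightarrowtail X$: since $\{\xi_{X}\}$ is a colimit cocone we have $\xi_{X'}\circ m=\xi_{X}$, whence $\G X=\xi_{X}^{-1}(F)=m^{-1}\bigl(\xi_{X'}^{-1}(F)\bigr)=m^{-1}(\G X')$, which says precisely that the square is cartesian.

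With that square in hand I would chain the collected facts: starting from $\psi_{X'}\circ m=\exists_{\phi_{\G X'}}\circ\epsilon_{X'}^{-1}\circ\sgt_{X'}\circ m$, push $\sgt_{X'}\circ m$ through Lemma \ref{lem:sgtNaturality} to $\exists_{m}\circ\sgt_{X}$, apply Beck--Chevalley to the pullback above to rewrite $\epsilon_{X'}^{-1}\circ\exists_{m}$ as $\exists_{\G m}\circ\epsilon_{X}^{-1}$, and merge $\exists_{\phi_{\G X'}}\circ\exists_{\G m}=\exists_{\phi_{\G X'}\circ\G m}$ by functoriality. Finally $\G m$ is a monomorphism of $\E_{F}$ — as $\G$ is lex and the inclusion $\E_{F}\hookrightarrow\E$ reflects monomorphisms — so the cocone hypothesis on $\phi$ gives $\phi_{\G X'}\circ\G m=\phi_{\G X}$, leaving $\exists_{\phi_{\G X}}\circ\epsilon_{X}^{-1}\circ\sgt_{X}=\psi_{X}$. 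The only delicate points beyond establishing the pullback square are keeping the variances straight in the Beck--Chevalley step and checking that $\G m$ genuinely lands in the mono-diagram of $\E_{F}$, both of which are routine once the square is known to be cartesian.
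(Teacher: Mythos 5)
Your proposal is correct and follows essentially the same route as the paper: the cocone condition is decomposed into the singleton-naturality square (Lemma \ref{lem:sgtNaturality}), the Beck--Chevalley square for the cartesian counit-naturality square, and the cocone triangle for $\phi$ applied to the monomorphism $\G m$. The only cosmetic differences are that for the extension square \eqref{eq:PsiExtendsPhi} you use the cancellation $\epsilon_X^{-1}\circ\exists_{\epsilon_X}=\id$ in place of the paper's Lemma \ref{lem:sgtExtNaturality}, and that you spell out (correctly, via \eqref{eq:PullbackDescriptionOfTheCounitAndComonad} and pullback pasting) why the counit square is a pullback, which the paper leaves implicit.
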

\begin{proof}
For any monomorphism $m\colon X\rightarrowtail Y$ in $\E$, we have a commutative diagram
    \[
    \begin{tikzcd}[row sep = 10pt]
        X\ar[r,"\sgt_X"] \ar[dd, "m", tail]&PX \ar[r,"{\epsilon_X}^{-1}"]\ar[dd, "\exists_m"]&P\G X\ar[rd,"\exists_{\phi_{\G X}}"]\ar[dd,"\exists_{\G m}"]&\\    &&&PL\\
        Y\ar[r,"\sgt_X"] &PY \ar[r,"{\epsilon_Y}^{-1}"]&P\G Y\ar[ru,"\exists_{\phi_{\G Y}}"'] &
    \end{tikzcd}
    \]
    since the left square commutes by Lemma \ref{lem:sgtNaturality}, the right triangle commutes by the assumption of $\phi$ being a cocone under $(\E_F)_{\mono} \to \E_{F}$, and the middle square commutes by the Beck-Chevalley condition for the pullback square
    \[
    \begin{tikzcd}
        \G X \ar[r, "\G m", tail]\ar[d,"\epsilon_X", tail]\ar[rd, phantom, "\lrcorner", very near start]&\G Y\ar[d,"\epsilon_Y", tail]\\
        X\ar[r,"m", tail] & Y.
    \end{tikzcd}
    \]
    This proves that $\psi$ is a cocone under $\E_{\mono}\to \E$.
    
     To prove the latter part, which states the diagram
    \[
    \begin{tikzcd}
        \G X \ar[rrr, "\phi_{\G X}"]\ar[d, tail, "\epsilon_X"]&&& L\ar[d, "\sgt_L", tail]\\
        X\ar[r,"\sgt_X"'] &PX \ar[r,"{\epsilon_X}^{-1}"']&P\G X\ar[r,"\exists_{\phi_{\G X}}"'] & PL
    \end{tikzcd}
    \]
    is commutative, it suffices to check the commutativity of
    \[
    \begin{tikzcd}
        \G X \ar[rrr, "\phi_{\G X}"]\ar[rrd, "\sgt_{\G X}", tail]\ar[d, tail, "\epsilon_X"]&&& L\ar[d, "\sgt_L", tail]\\
        X\ar[r,"\sgt_X"'] &PX \ar[r,"{\epsilon_X}^{-1}"']&P\G X\ar[r,"\exists_{\phi_{\G X}}"'] & PL.
    \end{tikzcd}
    \]
    Lemma \ref{lem:sgtNaturality} and Lemma \ref{lem:sgtExtNaturality} complete the proof.
\end{proof}

Now we are ready to prove Theorem \ref{thm:MainTheorem}.
\begin{proof}[Proof of Theorem \ref{thm:MainTheorem}]
    In (Corollary \ref{cor:FilterLivesInHQuotient} and) Lemma \ref{lem:BeingCocone}, we have already observed that the family of morphisms 
    $\{\xi_{Z}^F \colon Z \to F\}_{Z\in \ob(\E_F)}$ is a cocone under the functor $(\E_{F})_{\mono} \to \E_F$. 
    It suffices to prove that this family has the universality as a colimit of all monomorphisms in $\E_F$. Since Lemma \ref{lem:JointlyEpimorphic} ensures the uniqueness part of the desired universality, we will prove the existence part.

    Take an arbitrary cocone $\{\phi_Z \colon Z\to L\}_{Z\in \ob(\E_F)}$ under the functor $(\E_{F})_{\mono} \to \E_F$. Let $\{\psi_X\colon X \to PL\}_{X\in \ob(\E)}$ be the cocone under the functor $\E_{\mono} \to \E$ given in Lemma \ref{lem:ExtensionLemma}. By the universality of the local state classifier $\Xi$, we have a unique morphism $\gamma \colon \Xi \to PL$ such that
    \[
    \begin{tikzcd}
    [row sep = 50pt]
        &X\ar[ld,"\xi_X"']\ar[rd,"\psi_X"]&\\
        \Xi\ar[rr,"\gamma"]&&PL
    \end{tikzcd}
    \]
    commutes for every $X\in \ob(\E)$. Due to the diagrams (\ref{eq:PullbackDescriptionOfTheCounitAndComonad}) and (\ref{eq:PsiExtendsPhi}), 
    the following diagram is also commutative.
    \begin{equation}\label{eq:block}
    \begin{tikzcd}
    [row sep = 50pt]
        &\G X\ar[ld, "\xi_{\G X}^F"']\ar[d,"\epsilon_X", tail]\ar[rd, "\phi_{\G X}"]&\\
       F\ar[d,tail, "\iota_F"'] &X\ar[ld,"\xi_X"']\ar[rd,"\psi_X"]&L\ar[d,"\sgt_L", tail]\\
        \Xi\ar[rr,"\gamma"]&&PL
    \end{tikzcd}
    \end{equation}
    Theorefore, it is sufficient to prove that $\gamma \circ \iota_F\colon F\rightarrowtail \Xi \to PL$ lifts along $\sgt_L$
    \[
    \begin{tikzcd}
    [row sep = 50pt]
        &\G X\ar[ld, "\xi_{\G X}^F"']\ar[rd, "\phi_{\G X}"]&\\
       F\ar[d,tail, "\iota_F"'] \ar[rr, dashed, "?"]&&L\ar[d,"\sgt_L", tail]\\
        \Xi\ar[rr,"\gamma"]&&PL,
    \end{tikzcd}
    \]
    since $\sgt_L$ is monic and every object in $\E_{F}$ is isomorphic to an object of the form of $\G X$.
    Take the characteristic morphism of $\sgt_{L}\colon L \rightarrowtail PL$ as
    \[
    \begin{tikzcd}
    [row sep = 50pt]
        &\G X\ar[ld, "\xi_{\G X}^F"']\ar[rd, "\phi_{\G X}"]&&\\
       F\ar[d,tail, "\iota_F"'] \ar[rr, dashed, "?"]&&L\ar[d,"\sgt_L", tail]\ar[rr, "!"]\ar[rrd, phantom, "\lrcorner", very near start]&&1\ar[d,"\true", tail]\\
        \Xi\ar[rr,"\gamma"]&&PL\ar[rr,"\chi_L"]&&\Omega.
    \end{tikzcd}
    \]
    By the universality of the pullback, it suffices to
    prove that the composite $F\rightarrowtail \Xi \to PL \to \Omega$ coincides with the true morphism $\true_{F}\colon F \to \Omega$. 
    The joint surjectivity of $\{\xi_{\G X}^F\colon\G X \to F \}_{X\in \ob(\E)}$, which is an immediate corollary of Lemma \ref{lem:JointlyEpimorphic}, reduces it to proving that the composition $\G X \to F \to \Xi \to PL \to \Omega$ coincides with $\true_{\G X}$ for every $X \in \ob(\E)$. This follows from the commutativity of the perimeter of
    \[
    \begin{tikzcd}
    [row sep = 50pt]
        &\G X\ar[ld, "\xi_{\G X}^F"']\ar[rd, "\phi_{\G X}"]\ar[rrrd, bend left, "!"]&&\\
       F\ar[d,tail, "\iota_F"'] 
       &(\ref{eq:block})&L\ar[d,"\sgt_L", tail]\ar[rr, "!"]\ar[rrd, phantom, "\lrcorner", very near start]&&1\ar[d,"\true", tail]\\
        \Xi\ar[rr,"\gamma"]&&PL\ar[rr,"\chi_L"]&&\Omega.
    \end{tikzcd}
    \]
    This completes the proof. 
\end{proof}

\begin{remark}
    This proof can be shortened if the topos $\E$ is a Grothendieck topos. In fact, in order to ensure the existence of the lift, which is equivalent to the inequality $\exists_{\gamma}(F)\le L \text{ in }\Sub_{\E}(PL)$, it suffices to observe
    \[\exists_{\gamma}(F)=\exists_{\gamma}\left(\bigvee_{X}\Image(\xi^F_{\G X})\right)=\bigvee_{X}\Image(\gamma\circ \iota_F\circ\xi^F_{\G X})=\bigvee_{X}\Image(\sgt_{L}\circ \phi_{\G X})=\exists_{\sgt_{L}}\left(\bigvee_{X}\Image(\phi_{\G X})\right)\le L.
    \]
\end{remark}

Let us provide an example of our main theorem.
\begin{corollary}[Local state classifier of a continuous group action topos]\label{cor:LSCofTopologicalGroups}
    For a topological group $G$, the local state classifier $\Xi$ of the Grothendieck topos of continuous $G$-actions $\Cont(G)$ is given by
    \[
    \Xi=\{\text{open subgroups of }G\},
    \]
    equipped with the right conjugate action
    \[
    H\cdot g \coloneqq g^{-1}Hg.
    \]
\end{corollary}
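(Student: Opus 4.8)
The plan is to realize $\Cont(G)$ as the hyperconnected quotient $\E_F$ of the topos $\E \coloneqq \PSh(G)$ of discrete right actions of the underlying abstract group of $G$ (obtained by forgetting the topology), for a suitable internal filter $F$, and then to read off the local state classifier of $\E_F$ directly from Theorem \ref{thm:MainTheorem}. First I would invoke Example \ref{exmp:CaseOfGroup}: the local state classifier of $\E = \PSh(G)$ is $\Xi = \Sub_{\Group}(G)$, the set of all subgroups of $G$ under the conjugation action $H\cdot g = g^{-1}Hg$, with cocone components $\xi_X(x) = \Stab(x)$. I then single out
\[
F \coloneqq \{\text{open subgroups of } G\} \subseteq \Xi.
\]
Since conjugation by any $g\in G$ is a homeomorphism of $G$, it preserves openness of subgroups, so $F$ is a sub-$G$-set of $\Xi$, i.e. a genuine subobject $\iota_F \colon F \rightarrowtail \Xi$.

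Second, I would verify that $F$ is an internal filter. By Proposition \ref{prop:SemilatticeStructure} the semilattice meet on $\Xi$ is intersection of subgroups (the stabilizer of a tuple being the intersection of the stabilizers of its entries), so this reduces to the standard facts that the open subgroups contain the top element $G$, are closed under finite intersection, and are upward closed (any subgroup containing an open subgroup is a union of its cosets, hence open). These pointwise properties then yield the diagrammatic internal-filter condition of Subsection \ref{sssec:InducedFullSub}.

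Third, I would identify $\E_F$ with $\Cont(G)$. By the criterion (\ref{eq:FullSubCondition}), a discrete $G$-set $X$ lies in $\ob(\E_F)$ iff $\xi_X$ factors through $F$, i.e. iff every stabilizer $\Stab(x) = \xi_X(x)$ is open; this is exactly the condition that the action $X\times G \to X$ is continuous for the discrete topology on $X$. Hence $\E_F$ coincides with the full subcategory $\Cont(G) \hookrightarrow \PSh(G)$, the comonad $\G$ sends $X$ to its largest continuous sub-action $\{x\in X : \Stab(x)\text{ open}\}$ in agreement with the pullback description (\ref{eq:PullbackDescriptionOfTheCounitAndComonad}), and the resulting geometric morphism $\PSh(G) \to \Cont(G)$ is the familiar hyperconnected one (cf. \cite{rogers2023toposes}). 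Applying Theorem \ref{thm:MainTheorem} (whose hypotheses hold since $\Cont(G)$ is a Grothendieck topos, Proposition \ref{prop:ExistenceForGrothendieck}) then gives that the local state classifier of $\Cont(G)$ is $F = \{\text{open subgroups}\}$ with the conjugation action, with cocone components $\xi_Z^F(z) = \Stab(z)$, as claimed.

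The step requiring the most care is the identification in the third paragraph: confirming that the abstractly defined full subcategory $\E_F$ literally coincides with $\Cont(G)$ as usually defined, equivalently that ``continuous right $G$-action on a discrete set'' is the same as ``all point-stabilizers are open,'' and that the associated hyperconnected quotient is the standard one. Checking that $F$ is an internal filter is routine given the pointwise facts about open subgroups, so the genuine content lies in matching this subcategory with the classical topos of continuous $G$-sets.
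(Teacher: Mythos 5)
Your proposal is correct and follows essentially the same route as the paper's proof: realize $\Cont(G)$ as the hyperconnected quotient of $\PSh(G^{\delta})$ corresponding to the internal filter of open subgroups, then apply Theorem \ref{thm:MainTheorem}. The paper leaves the verification that $F$ is an internal filter and the identification of $\E_F$ with $\Cont(G)$ as routine remarks, whereas you spell them out, but the argument is the same.
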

\begin{proof}
    The Grothendieck topos of continuous $G$-actions, which is denoted by $\Cont(G)$, is a hyperconnected quotient of the presheaf topos $\PSh(G^{\delta})$ on the underlying (discrete) group $G^{\delta}$
    \[
    h\colon \PSh(G^{\delta}) \twoheadrightarrow \Cont(G).
    \]
    The local state classifier of the presheaf topos $\PSh(G^{\delta})$ is the set of all subgroups, and each component $\xi_X$ of the colimit cocone sends an element to its stabilizer. A $G^{\delta}$-set belongs to $\Cont(G)$ if and only if the stabilizer subgroup for any element is open. Therefore, the internal filter $F$ that corresponds to the hyperconnected quotient $\Cont(G)$ is the set of all open subgroups of $G$. (One can easily verify that this is, in fact, an internal filter.)
    Therefore, we complete the proof by applying Theorem \ref{thm:MainTheorem}. 
\end{proof}
Notice that Proposition \ref{prop:NormalizationLemma} and Corollary \ref{cor:FilterLivesInHQuotient} appear here to ensure that the topological group $G$ continuously acts on $\Xi=\{\text{open subgroups of }G\}$ as the normalizer group of an open subgroup is open.

\section{Motivating example: the topos of word actions}\label{sec:MotivationgExample}
This section briefly introduces the initial part of the motivating example, namely the topos of word actions $\Aset$. 
The following contents will be included in the forthcoming paper \textit{Topoi of automata II}, but the author believes that including a nontrivial concrete example will help clarify the motivation of the present paper. Therefore, only the part related to the normalization operator will be outlined below. For the details of the automata-theoretic notions, see \cite{pin2022mathematical}.

\subsection{Right congruences form the local state classifier of the topos \texorpdfstring{$\Aset$}{Aset}}
For a set $\A$, which we call \demph{alphabet}, we consider its free monoid $\MA$ and its presheaf topos $\Aset \coloneqq \PSh(\MA)$. An object of $\Aset$, which we call a \demph{$\A$-set}, can be regarded as a pair $(Q, \delta)$ consisting of a set $Q$ and a function $\delta \colon Q\times \A \to Q$.
This is the simplest topos of the four topoi in the author's paper \cite{hora2024topoi}. Then, \textit{the $\A$-set of languages} $\Lan \in \ob(\Aset)$ is defined as the set of all languages $\Lan \coloneqq \Pow(\MA)$ equipped with the \demph{left quotient action}
\[
L\ast u \coloneqq \{v\in \MA\mid uv\in L\},
\]
which is also categorically characterized by the universality $\Aset((Q, \delta), \Lan) \cong \Pow(Q)$.

What is the local state classifier of the topos $\Aset$? By the general formula for the local state classifier of a presheaf topos \cite[Example 3.22]{hora2024internal}, we can conclude that $\Xi$ is the $\A$-set of all \demph{right congruences}
\[
\Xi = \left\{\text{equivalence relation }{\sim}\text{ on }\MA\mid \forall u,v,w\in \MA,\;  u\sim v \implies uw \sim vw\right\},
\]
equipped with the right $\A$-action
\[
u\mathrel{({\sim} * w)}v \iff wu \sim wv.
\]
Then, what is $\xi_{\Lan}(L)$? In other words, what is the right congruence categorically associated with a given language $L\in \Lan$?
In general, each component of the colimit cocone $\xi_{(Q, \delta)}\colon (Q, \delta)\to \Xi$ sends a state $q\in Q$ to the right congruence $\xi_{(Q, \delta)}(q) \in \Xi$ defined by
\[
u \mathrel{(\xi_{(Q, \delta)}(q) )} v \iff q\cdot u=q\cdot v.
\]
In particular, the morphism $\xi_{\Lan}\colon \Lan \to \Xi$ sends a language $L\in \Lan$ to
what is called \demph{the Nerode congruence}
\[
u \mathrel{(\xi_{\Lan}(L))}v \iff (L \ast u = L \ast v) \iff \left(\forall w\in \MA, (uw\in L \iff vw\in L)\right),
\]
which is the right congruence used for automata minimization.

In \cite{hora2024topoi}, the hyperconnected geometric morphism $h \colon \Aset \twoheadrightarrow \ofAset$ to the topos of \demph{orbitwise finite $\A$-sets} plays a central role. Here, an orbitwise finite $\A$-set is a $\A$-set $(Q, \delta)$ such that for each element $q\in Q$, its orbit $\{q\cdot w\mid w\in \MA\}\subset Q$ is finite. By construction, the corresponding internal filter $F_{\of} \rightarrowtail \Xi$ is given by
\begin{equation}\label{eq:DefinitionOfFof}
    F_{\of}= \{{\sim} \in \Xi \mid \#(\MA/{\sim})<\infty\}.
\end{equation}
One reason why this particular topos $\ofAset$, which corresponds to the filter $F_{\of}\subset \Xi$, captures the notion of regular languages (see \cite{hora2024topoi}) is the Myhill-Nerode theorem.
\begin{proposition}[Myhill-Nerode theorem]
    A language $L$ is regular if and only if $\xi_{\Lan}(L) \in F_{\of}$.
\end{proposition}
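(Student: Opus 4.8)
The plan is to reduce the statement to the classical Myhill--Nerode equivalence by first unwinding what the membership $\xi_{\Lan}(L)\in F_{\of}$ means combinatorially. Since $u \mathrel{(\xi_{\Lan}(L))} v \iff L\ast u = L\ast v$, the assignment $[u]\mapsto L\ast u$ is a well-defined bijection from the quotient $\MA/\xi_{\Lan}(L)$ onto the set of \demph{left quotients} $\{L\ast u \mid u\in\MA\}$, which is exactly the orbit of $L$ under the left-quotient action in $\Lan$. Hence $\xi_{\Lan}(L)\in F_{\of}$, i.e. $\#(\MA/\xi_{\Lan}(L))<\infty$, is equivalent to $L$ having only finitely many distinct left quotients, equivalently to the cyclic sub-$\A$-set $\langle L\rangle\rightarrowtail\Lan$ generated by $L$ being finite. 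It then remains to identify this finiteness with regularity, and I would prove the two implications separately.

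For ``$L$ regular $\implies \xi_{\Lan}(L)\in F_{\of}$'' I would use recognizability by a finite $\A$-set. Writing a recognizing datum for $L$ as a finite $(Q,\delta)$ together with $q_0\in Q$ and $P\subseteq Q$ with $L=\{u\mid q_0\cdot u\in P\}$, the corresponding morphism $f_P\colon (Q,\delta)\to\Lan$ (under the universality $\Aset((Q,\delta),\Lan)\cong\Pow(Q)$, with $f_P(q)=\{u\mid q\cdot u\in P\}$) satisfies $f_P(q_0)=L$ and carries the orbit of $q_0$ onto the orbit $\langle L\rangle$. As $Q$ is finite, the orbit of $q_0$ is finite, so $\langle L\rangle$ is finite and $\xi_{\Lan}(L)\in F_{\of}$. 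Equivalently, and more in the spirit of the earlier sections, any morphism of $\A$-sets can only coarsen the associated congruence, giving the inequality $\xi_{(Q,\delta)}(q_0)\le \xi_{\Lan}(L)$ in $\Xi$; since $\xi_{(Q,\delta)}(q_0)$ has index at most $\#Q<\infty$ and $F_{\of}$ is upward closed, the conclusion follows.

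For the converse ``$\xi_{\Lan}(L)\in F_{\of}\implies L$ regular'' I would exhibit the minimal automaton directly as the orbit sub-$\A$-set. Taking $(Q,\delta)\coloneqq\langle L\rangle$, which is finite by the reduction above and therefore lies in $\ofAset$, initial point $q_0\coloneqq L$, and accepting set $P\coloneqq\{M\in\langle L\rangle\mid \varepsilon\in M\}$, one checks that $\{u\mid q_0\cdot u\in P\}=\{u\mid \varepsilon\in L\ast u\}=\{u\mid u\in L\}=L$, using $\varepsilon\in L\ast u\iff u\in L$. Thus $L$ is recognized by a finite $\A$-set and is regular.

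The main obstacle is not any single computation but pinning down the definition of \emph{regular} in force and matching it with the finiteness extracted above. If regularity is taken to mean recognizability by an object of $\ofAset$ (with a chosen point), one must observe that only the orbit of the initial point is relevant, so recognizability by an orbitwise finite $\A$-set coincides with recognizability by a genuinely finite automaton; this is precisely what lets the two directions meet, since $\langle L\rangle$ is a single, hence finite, orbit under the hypothesis. If instead regularity is defined through finite-monoid recognition, a short additional step—passing to the transition monoid of $\langle L\rangle$ in one direction, and reading off the derivatives $L\ast u$ from a recognizing homomorphism in the other—supplies the translation.
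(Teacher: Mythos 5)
Your proof is correct: the identification of $\MA/\xi_{\Lan}(L)$ with the set of left quotients $\{L\ast u\}$, the passage from a finite recognizing automaton to finiteness of the orbit of $L$, and the converse construction of the minimal automaton on $\langle L\rangle$ with accepting set $\{M\mid\varepsilon\in M\}$ are all sound. The paper states this proposition without proof, as the classical Myhill--Nerode theorem (deferring to \cite{pin2022mathematical}), and your argument is precisely the standard derivative-based proof it implicitly relies on.
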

In other words, the pullback diagram (\ref{eq:PullbackDescriptionOfTheCounitAndComonad})
\[
\begin{tikzcd}
        \G_{\of} \Lan\ar[r, "\xi^{F_{\of}}_{\G X}"]\ar[d, "\epsilon_\Lan", tail]\ar[rd, phantom, "\lrcorner", very near start]& F_{\of}\ar[d,tail, "\iota_{F_{\of}}"]\\
        \Lan \ar[r , "\xi_\Lan "']& \Xi.
    \end{tikzcd}
\]
exactly states that $\G_{\of} \Lan$, where $\G_{\of}$ denotes the corresponding lex comonad, is the orbitwise finite $\A$-set of regular languages. In the forthcoming paper, this is the key observation to categorically address the Myhill-Nerode theorem and its generalized versions.

The main theorem of the present paper (Theorem \ref{thm:MainTheorem}) implies that the local state classifier of the topos $\ofAset$ is given by $F_{\of}$.
\begin{corollary}\label{cor:LocalStateClassifierOfOrbitfiniteAset}
    The local state classifier of $\ofAset$ is given by $F_{\of}$ (eq.(\ref{eq:DefinitionOfFof})), equipped with the morphisms $\{\xi_{(Q, \delta)}\colon (Q, \delta)\to F_{\of}\}_{(Q, \delta)\in \ob(\ofAset)}$, where
    \[
    u\mathrel{(\xi_{(Q,\delta)}(q))}v \iff q\cdot u=q\cdot v.
    \]
\end{corollary}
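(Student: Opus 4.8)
The plan is to realize $\ofAset$ as the hyperconnected quotient $(\Aset)_{F_{\of}}$ induced by the internal filter $F_{\of}$ of eq.(\ref{eq:DefinitionOfFof}), and then to invoke Theorem \ref{thm:MainTheorem} directly. Two things must be pinned down before the theorem applies: first, that $F_{\of}$ really is an internal filter of $\Xi$; and second, that the full subcategory of $\Aset$ induced by $F_{\of}$ in the sense of (\ref{eq:FullSubCondition}) is exactly the topos $\ofAset$ of orbitwise finite $\A$-sets. Granting these, Theorem \ref{thm:MainTheorem} immediately yields that $\{\xi_Z^{F_{\of}} \colon Z \to F_{\of}\}_{Z \in \ob(\ofAset)}$ is a local state classifier of $\ofAset$, and it only remains to read off the formula.

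To check that $F_{\of}$ is an internal filter, I would first identify the semilattice structure on $\Xi$ coming from Proposition \ref{prop:SemilatticeStructure}. Computing $\xi_{(Q_1,\delta_1)\times (Q_2,\delta_2)}$ on a pair $(q_1,q_2)$ shows that the meet $\land$ on $\Xi$ is the intersection of right congruences (their common refinement), that the top element $\top$ is the coarsest congruence $\MA\times \MA$, and hence that the induced order $f\leq g$ on each hom-set is refinement of congruences, $f\subseteq g$. With this identification, upward-closedness of $F_{\of}$ is the observation that a congruence coarser than one with finitely many classes again has finitely many classes, while closure under finite meets follows from $\top$ having a single class together with the bound $\#(\MA/({\sim_1}\cap{\sim_2}))\leq \#(\MA/{\sim_1})\cdot \#(\MA/{\sim_2})$.

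Next I would confirm $(\Aset)_{F_{\of}}=\ofAset$. By (\ref{eq:FullSubCondition}), an $\A$-set $(Q,\delta)$ lies in $(\Aset)_{F_{\of}}$ iff $\xi_{(Q,\delta)}$ factors through $F_{\of}$, i.e.\ iff for every state $q\in Q$ the congruence $\xi_{(Q,\delta)}(q)$, given by $u\sim v \iff q\cdot u = q\cdot v$, has finitely many classes. Since the classes of this congruence biject with the elements of the orbit $\{q\cdot w \mid w\in \MA\}$, this condition is precisely orbitwise finiteness, so the two full subcategories coincide and $F_{\of}$ is indeed the filter corresponding to the hyperconnected quotient $h\colon \Aset \twoheadrightarrow \ofAset$.

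Finally, applying Theorem \ref{thm:MainTheorem} to the filter $F_{\of}$ gives that $\{\xi_Z^{F_{\of}}\}_{Z\in \ob(\ofAset)}$ is a local state classifier of $\ofAset$. The stated formula then follows because $\xi_Z^{F_{\of}}$ is by definition the unique lift of $\xi_Z$ along the subobject inclusion $\iota_{F_{\of}}\colon F_{\of}\rightarrowtail \Xi$; as this is the literal inclusion of sets of congruences, the lift is $\xi_Z$ itself with restricted codomain, so $\xi_{(Q,\delta)}^{F_{\of}}(q)$ remains the congruence $u\sim v \iff q\cdot u = q\cdot v$. The proof is thus essentially an application of the main theorem, and the only real bookkeeping is the filter verification; its one subtle point is getting the direction of the order right—confirming that $\land$ is intersection (refinement) rather than join—after which the class-counting bounds make upward-closedness and meet-closure routine.
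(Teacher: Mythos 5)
Your proposal is correct and follows exactly the route the paper intends: the paper states this corollary without a separate proof, treating it as an immediate application of Theorem \ref{thm:MainTheorem} once $\ofAset$ is identified with the hyperconnected quotient corresponding to the internal filter $F_{\of}$. The verifications you spell out (that $\land$ on $\Xi$ is intersection of right congruences, that $F_{\of}$ is upward closed and meet-closed, and that factoring through $F_{\of}$ is orbitwise finiteness) are precisely the bookkeeping the paper leaves implicit, and they are all accurate.
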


\subsection{Two-sided congruence and Syntactic monoids}
In order to define the \demph{syntactic monoid} of a language, it does not suffice to consider only right congruences. We need \demph{two-sided congruences}
on $\MA$. Recall that an equivalence relation ${\sim} \subset \MA\times \MA$ is said to be a two-sided congruence if it satisfies
\[
\forall u,v,w,w'\in \MA,\; u\sim v \implies  wuw' \sim wvw'.
\]
In algebraic language theory, \demph{the syntactic monoid} $M_L$ of a language $L$ is defined to be the quotient monoid $M_L\coloneqq \MA/{\cong_L}$ of $\MA$, where \demph{the syntactic congruence} $\cong_L$ is the two-sided congruence defined by
\[
u \cong_L v \iff \left (\forall w,w'\in \MA, \; wuw' \in L \iff wvw'\in L\right).
\]
We can rewrite this in terms of the Nerode congruence $\xi_{\Lan}(L)$ as follows:
\begin{align*}
    u \cong_L v 
    &\iff \left (\forall w,w'\in \MA, \; wuw' \in L \iff wvw'\in L\right)\\
    &\iff \left (\forall w,w'\in \MA, \; uw' \in L*w \iff vw'\in L*w\right)\\
    &\iff \forall w\in \MA, \; u \mathrel{(\xi_{\Lan}(L*w))} v\\
    &\iff \forall w\in \MA, \; u \mathrel{(\xi_{\Lan}(L)*w)} v.
\end{align*}
This provides a description of the syntactic congruence in terms of a local state classifier.
\begin{proposition}\label{prop:SyntacticCongruenceIntermsOfLSC}
    For any language $L\in \Lan$, the syntactic congruence $\cong_L$ is given by the infimum\footnote{This is an infinitary meet, which does not necessarily exist in the general setting of elementary topoi. In this particular case of $\Aset$, the local state classifier $\Xi$ has a complete lattice structure.} of the orbit of the Nerode congruence $\xi_{\Lan}(L)$ in $\Xi$:
\[
{\cong_L} = 
\bigwedge_{w\in \MA} \left(\xi_{\Lan}(L)*w\right)
\text{ in } \Xi.
\]
\end{proposition}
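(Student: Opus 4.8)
The plan is to read the identity directly off the chain of equivalences displayed just before the statement, and then to reinterpret its right-hand side as an infimum in $\Xi$. The first three equivalences of that chain are purely definitional: the first is the definition of $\cong_L$, the second applies the defining formula $L\ast w=\{v\mid wv\in L\}$ of the left quotient action to absorb the prefix $w$, and the third recognizes the remaining clause $\forall w'\,(uw'\in L\ast w \iff vw'\in L\ast w)$ as the assertion that $(u,v)$ lies in the Nerode congruence $\xi_{\Lan}(L\ast w)$. The only step needing a genuine (if short) argument is the last one, $\xi_{\Lan}(L\ast w)=\xi_{\Lan}(L)\ast w$: this is exactly the equivariance of $\xi_{\Lan}\colon \Lan\to\Xi$, which holds because $\xi_{\Lan}$ is a component of the colimit cocone and hence a morphism of $\A$-sets, so it intertwines the left quotient action on $\Lan$ with the right action on $\Xi$. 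Together these reduce $\cong_L$ to the relation $\{(u,v)\mid \forall w\in\MA,\ u\mathrel{(\xi_{\Lan}(L)\ast w)}v\}$.

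It then remains to identify this relation with the infimum $\bigwedge_{w\in\MA}(\xi_{\Lan}(L)\ast w)$ taken in $\Xi$. I would first pin down the meet-semilattice structure of $\Xi$ concretely: by Proposition \ref{prop:SemilatticeStructure}, $\xi_{Q_1\times Q_2}(q_1,q_2)=\xi_{Q_1}(q_1)\land\xi_{Q_2}(q_2)$, and since $\xi_{(Q,\delta)}(q)$ is the congruence $\{(u,v)\mid q\cdot u=q\cdot v\}$, the left-hand side is the congruence $\{(u,v)\mid q_1\cdot u=q_1\cdot v\text{ and }q_2\cdot u=q_2\cdot v\}$, i.e.\ the set-theoretic intersection of the two congruences. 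Because every right congruence ${\sim}$ arises as $\xi_{\MA/{\sim}}([\varepsilon])$, this shows that the binary meet $\land$ on $\Xi$ is intersection of relations, and hence that the induced order is refinement: ${\sim_1}\leq{\sim_2}$ iff ${\sim_1}\subseteq{\sim_2}$.

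The remaining point, and the only real subtlety, is that the infimum in the statement is infinitary, whereas Proposition \ref{prop:SemilatticeStructure} supplies only finite meets. Here I would invoke the fact (noted in the footnote) that in this presheaf case $\Xi$ is a complete lattice; the greatest lower bound of a family in the refinement order is the largest congruence contained in every member, which is precisely their intersection $\bigcap_{w}(\xi_{\Lan}(L)\ast w)$. An intersection of right congruences is again a right congruence, so this intersection genuinely lies in $\Xi$ and is the infimum. Thus $\bigwedge_{w}(\xi_{\Lan}(L)\ast w)$ equals the relation $\{(u,v)\mid \forall w,\ u\mathrel{(\xi_{\Lan}(L)\ast w)}v\}$, which is exactly the reduced description of $\cong_L$ obtained above, completing the proof. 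I expect this last identification of the infinitary meet with relation-intersection to be the main thing to get right, since it is the only place where the complete-lattice hypothesis is actually used.
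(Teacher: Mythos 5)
Your argument is correct and follows essentially the same route as the paper, whose de facto proof is exactly the displayed chain of equivalences reducing $\cong_L$ to $\forall w,\ u\mathrel{(\xi_{\Lan}(L)\ast w)}v$ (with the last step being equivariance of $\xi_{\Lan}$). You additionally spell out why the (infinitary) meet in $\Xi$ is intersection of right congruences, a point the paper leaves implicit via the footnote; that verification is accurate and harmless.
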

What we will use
in the paper \textit{Topoi of automata II} is the following condition for a language $L$:
\begin{equation}\label{eq:NerodeAndMyhillCongruence}
    \text{For any internal filter $F\subset \Xi$, we have }\xi_{\Lan}(L) \in F \iff {\cong_L}\in F.
\end{equation}
In the topos-theoretic framework, this equivalence (\ref{eq:NerodeAndMyhillCongruence}) is crucial for comparing the automata and monoids, as it states that the Nerode congruence $\xi_{\Lan}(L)$, which realizes the minimal automaton, and the syntactic congruence $\cong_L$, which realizes the syntactic monoid, behave in the same way in terms of hyperconnected quotients.

\textbf{This is where we need the normalization operator $\xi_{\Xi}$!}
Since any internal filter $F$ is closed upward, under the $\MA$-action, and taking finite infimums, the condition 
\begin{equation}\label{eq:RegularCongruenceCondition}
     \{\xi_{\Lan}(L)*w\mid w\in \MA\}\subset \Xi \text{ is a finite set}
\end{equation}
is sufficient for ensuring the condition (\ref{eq:NerodeAndMyhillCongruence}).
This condition (\ref{eq:RegularCongruenceCondition}) is equivalent to $\xi_{\Xi}(\xi_{\Lan}(L))\in F_{\of}$, in which the normalization operator $\xi_{\Xi}$ appears! Summarizing what we have observed, we obtain the following proposition.
\begin{proposition}\label{prop:NerodeAndMyhillCongruences}
    For any language $L\in \Lan$ with the property $\xi_{\Xi}(\xi_{\Lan}(L))\in F_{\of}$ and any internal filter $F\subset \Xi$, 
    the following conditions are equivalent:
    \begin{itemize}
        \item The Nerode congruence $\xi_{\Lan}(L)$ belongs to $F$.
        \item The syntactic congruence ${\cong_{L}}$ belongs to $F$.
    \end{itemize}
\end{proposition}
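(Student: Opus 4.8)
The plan is to reduce the stated equivalence to two ingredients already assembled in the surrounding discussion: the formula ${\cong_L} = \bigwedge_{w \in \MA}(\xi_{\Lan}(L) * w)$ from Proposition \ref{prop:SyntacticCongruenceIntermsOfLSC}, and the three defining closure properties of an internal filter $F \rightarrowtail \Xi$ — upward closure, closure under the $\MA$-action (automatic, since $F$ is a sub-$\A$-set), and closure under finite meets. The hypothesis involving the normalization operator will enter only to guarantee that the a priori infinitary meet defining $\cong_L$ collapses to a finite one, so that the last closure property becomes applicable.

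First I would reinterpret the hypothesis $\xi_{\Xi}(\xi_{\Lan}(L)) \in F_{\of}$ as condition (\ref{eq:RegularCongruenceCondition}), i.e.\ as the finiteness of the orbit $\{\xi_{\Lan}(L) * w \mid w \in \MA\}$. Unwinding the general description of the colimit cocone, the value $\xi_{\Xi}(\sigma)$ at a right congruence $\sigma \in \Xi$ — viewed as a ``state'' of the $\A$-set $\Xi$ — is the right congruence $\tau$ given by $u \mathrel{\tau} v \iff \sigma * u = \sigma * v$ in $\Xi$. The quotient $\MA/\tau$ is then in bijection with the image of the map $w \mapsto \sigma * w$, that is, with the orbit of $\sigma$; hence $\#(\MA/\tau)$ equals the orbit size, and $\xi_{\Xi}(\sigma) \in F_{\of}$ if and only if that orbit is finite. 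Taking $\sigma = \xi_{\Lan}(L)$ yields the desired reformulation of the hypothesis.

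Next I would prove the two implications. For ${\cong_L} \in F \implies \xi_{\Lan}(L) \in F$, it suffices to note that the empty word contributes the term $\xi_{\Lan}(L)$ itself to the meet, so ${\cong_L} \leq \xi_{\Lan}(L)$, and upward closure of $F$ finishes this direction (no finiteness is needed here). For the converse $\xi_{\Lan}(L) \in F \implies {\cong_L} \in F$, closure under the $\MA$-action gives $\xi_{\Lan}(L) * w \in F$ for every $w \in \MA$; by the finiteness established in the previous step the set $\{\xi_{\Lan}(L) * w \mid w \in \MA\}$ is finite, so ${\cong_L} = \bigwedge_{w} \xi_{\Lan}(L) * w$ is in fact a \emph{finite} meet of elements of $F$, and closure under finite meets yields ${\cong_L} \in F$.

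The point on which the whole argument hinges is this backward implication: an internal filter is only guaranteed to be closed under \emph{finite} meets, so the infinitary infimum defining the syntactic congruence is not directly usable, and the hypothesis mediated by the normalization operator $\xi_{\Xi}$ is exactly what forces it to be finite. The conceptual content therefore lies in the reformulation step, where $\xi_{\Xi}$ is identified with the operator detecting orbit-finiteness; once that identification is in place, the remainder is a routine application of the filter axioms.
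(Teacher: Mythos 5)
Your proof is correct and follows essentially the same route as the paper, which presents this proposition as a summary of the preceding discussion: the formula of Proposition \ref{prop:SyntacticCongruenceIntermsOfLSC}, the three closure properties of an internal filter, and the identification of the hypothesis $\xi_{\Xi}(\xi_{\Lan}(L))\in F_{\of}$ with the finiteness of the orbit $\{\xi_{\Lan}(L)*w\mid w\in\MA\}$. Your explicit justification of that last identification (via the bijection between $\MA/\xi_{\Xi}(\sigma)$ and the orbit of $\sigma$) is a welcome elaboration of a step the paper only asserts.
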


The typical examples of a language that satisfies the condition $\xi_{\Xi}(\xi_{\Lan}(L)) \in F_{\of}$ are regular languages. In its proof, we can use the normalization lemma (Proposition \ref{prop:NormalizationLemma}).
\begin{corollary}\label{cor:RegularlanguageCongruences}
    For any regular language $L\in \Lan$ and any internal filter $F\subset \Xi$, 
    the following conditions are equivalent:
    \begin{itemize}
        \item The Nerode congruence $\xi_{\Lan}(L)$ belongs to $F$.
        \item The syntactic congruence ${\cong_{L}}$ belongs to $F$.
    \end{itemize}
\end{corollary}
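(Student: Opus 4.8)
The plan is to deduce this directly from Proposition~\ref{prop:NerodeAndMyhillCongruences}, whose conclusion is exactly the claimed equivalence of the two conditions. It therefore suffices to verify that every regular language $L$ satisfies the hypothesis $\xi_{\Xi}(\xi_{\Lan}(L))\in F_{\of}$; once this is checked, the corollary is immediate. So the entire task reduces to transferring the finiteness known for $\xi_{\Lan}(L)$ to its image under the normalization operator.

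First I would recall that, by the Myhill--Nerode theorem stated above, a regular language $L$ has $\xi_{\Lan}(L)\in F_{\of}$, i.e.\ the Nerode congruence has finite index $\#(\MA/\xi_{\Lan}(L))<\infty$. Next I would invoke the normalization lemma (Proposition~\ref{prop:NormalizationLemma}): applied in $\Aset$ and precomposed with $\xi_{\Lan}\colon \Lan\to\Xi$, the inequality $\id_{\Xi}\le \xi_{\Xi}$ yields $\xi_{\Lan}(L)\le \xi_{\Xi}(\xi_{\Lan}(L))$ in $\Xi$. Finally, since $F_{\of}$ is an internal filter it is upward closed, so from $\xi_{\Lan}(L)\in F_{\of}$ together with $\xi_{\Lan}(L)\le \xi_{\Xi}(\xi_{\Lan}(L))$ we conclude $\xi_{\Xi}(\xi_{\Lan}(L))\in F_{\of}$, discharging the hypothesis of Proposition~\ref{prop:NerodeAndMyhillCongruences}.

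The one point that needs care --- and the main (if modest) obstacle --- is to make the order-theoretic bookkeeping match. I would check that the $\land$-semilattice order on $\Xi$ is refinement of right congruences, so that ${\sim_1}\le{\sim_2}$ means ${\sim_1}\subseteq{\sim_2}$ as relations and hence $\#(\MA/{\sim_2})\le \#(\MA/{\sim_1})$; this is precisely what makes $F_{\of}=\{{\sim}\mid \#(\MA/{\sim})<\infty\}$ upward closed, and it is also the direction in which the normalization lemma points, namely that $\xi_{\Xi}(\sim)$ is coarser than $\sim$. I would also note that the morphism-level inequality of Proposition~\ref{prop:NormalizationLemma} descends to the pointwise inequality in $\Xi$ because precomposition preserves the semilattice order on hom-sets. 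As a sanity check, and an alternative route that avoids the normalization lemma, one can compute directly that $\xi_{\Lan}(L)*w=\xi_{\Lan}(L*w)$, so the orbit $\{\xi_{\Lan}(L)*w\mid w\in\MA\}$ is the image under $\xi_{\Lan}$ of the finite set of left quotients $\{L*w\mid w\in\MA\}$ of a regular language, hence finite; via the identification of $\xi_{\Xi}(\xi_{\Lan}(L))\in F_{\of}$ with condition~(\ref{eq:RegularCongruenceCondition}) recorded above, this again establishes the hypothesis of Proposition~\ref{prop:NerodeAndMyhillCongruences}.
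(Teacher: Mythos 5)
Your proof is correct and follows essentially the same route as the paper: reduce to the hypothesis $\xi_{\Xi}(\xi_{\Lan}(L))\in F_{\of}$ of Proposition~\ref{prop:NerodeAndMyhillCongruences}, obtain $\xi_{\Lan}(L)\in F_{\of}$ from Myhill--Nerode, and conclude by upward closure via the normalization lemma $\id_{\Xi}\le\xi_{\Xi}$. Your order-theoretic bookkeeping and the alternative orbit-finiteness check are both sound but not needed beyond what the paper's argument already uses.
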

\begin{proof}
    In order to apply Proposition \ref{prop:NerodeAndMyhillCongruences}, it suffices to prove $\xi_{\Xi}(\xi_{\Lan}(L))\in F_{\of}$ for every regular language $L$.
    Since a language $L$ is regular if and only if $\xi_{\Lan}(L)\in F_{\of}$, and every internal filter $F$ is upward closed, it suffices to prove $\xi_{\Lan}(L) \leq \xi_{\Xi}(\xi_{\Lan}(L))$. 
    This follows from
    the normalization lemma $\id_{\Xi}\leq \xi_{\Xi}$ (Proposition \ref{prop:NormalizationLemma}).
\end{proof}


\appendix
\section{Preliminaries on elementary topoi}
This appendix summarizes the properties of elementary topoi that are used in this paper.

\begin{lemma}[Jointly epimorphic families in an elementary topos]\label{lem:JointlyEpimorphicFamilyAndSubobject}
    For a (possibly large) family of morphisms $\{f_\lambda \colon X_\lambda \to Y\}_{\lambda \in \Lambda}$ in a category $\E$, we consider the following two conditions:
    \begin{enumerate}
        \item $\{f_\lambda \colon X_\lambda \to Y\}_{\lambda \in \Lambda}$ is jointly epimorphic.
        \item If all morphisms in the family factor through a monomorphism $m\colon S\rightarrowtail Y$, then $m$ is an isomorphism.
    \end{enumerate}
    If $\E$ is balanced, i.e., every monic and epic morphism is an isomorphism, then $(1)$ implies $(2)$. If $\E$ has equalizers, then $(2)$ implies $(1)$. In particular, if $\E$ is an elementary topos, the two conditions $(1),(2)$ are equivalent.
\end{lemma}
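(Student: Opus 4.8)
The plan is to establish the two implications separately and then combine them in the topos case, where both structural hypotheses—balancedness and the existence of equalizers—are simultaneously available.

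For the implication $(1) \Rightarrow (2)$ under the balancedness hypothesis, I would assume that the family is jointly epimorphic and that all $f_\lambda$ factor through a fixed monomorphism $m\colon S \rightarrowtail Y$, writing $f_\lambda = m \circ g_\lambda$. Since a balanced category promotes any monic-and-epic morphism to an isomorphism and $m$ is already monic, it suffices to show that $m$ is epic. To this end I take an arbitrary parallel pair $u, v\colon Y \rightrightarrows Z$ with $u \circ m = v \circ m$; then $u \circ f_\lambda = u \circ m \circ g_\lambda = v \circ m \circ g_\lambda = v \circ f_\lambda$ for every $\lambda$, so joint epimorphicity forces $u = v$. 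Hence $m$ is epic, and therefore an isomorphism.

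For the implication $(2) \Rightarrow (1)$ under the existence of equalizers, I would assume $(2)$ and take a parallel pair $g, h\colon Y \rightrightarrows Z$ equalized by the entire family, i.e. $g \circ f_\lambda = h \circ f_\lambda$ for all $\lambda$. Forming the equalizer $e\colon E \rightarrowtail Y$ of $g$ and $h$—which is monic and exists regardless of the (possibly large) size of $\Lambda$—the universal property of the equalizer yields a factorization of each $f_\lambda$ through $e$. Applying $(2)$ to this monomorphism $e$ shows that $e$ is an isomorphism; since $g \circ e = h \circ e$ with $e$ invertible, we conclude $g = h$, which is precisely joint epimorphicity.

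Finally, for the topos case I would invoke the standard facts that every elementary topos is balanced and has all finite limits, hence in particular equalizers; both implications then apply and yield the equivalence of $(1)$ and $(2)$. I expect no serious obstacle in this argument: the only point deserving a moment's care is that both directions are insensitive to the size of $\Lambda$, since in each case one argues against a single auxiliary morphism—the hypothesized $m$ in one direction, the equalizer $e$ in the other—rather than against a limit or colimit indexed by the class $\Lambda$ itself.
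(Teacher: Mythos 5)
Your proof is correct and follows essentially the same route as the paper: showing $m$ is epic via the factorization and then invoking balancedness for $(1)\Rightarrow(2)$, and factoring the family through the equalizer of $g,h$ for $(2)\Rightarrow(1)$. No issues.
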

\begin{proof}
    First, assuming that the family is jointly epimorphic and $\E$ is balanced, we prove the condition $(2)$. Take an arbitrary monomorphism $m\colon S\rightarrowtail Y$ such that every morphism $f_\lambda$ in the family factors through $m$ as $f_{\lambda} = m\circ f^S_{\lambda}$. For any morphisms $g,h \colon Y \rightrightarrows Z$ such that $g\circ m = h\circ m$, we have $g\circ f_{\lambda} = g\circ m \circ f_{\lambda}^{S} = h\circ m \circ f_{\lambda}^{S} = h\circ f_{\lambda}$ and hence $g=h$. This proves that $m$ is epic. The balancedness assumption implies that $m$ is an isomorphism.

    Next, assuming the condition $(2)$ and that $\E$ has equalizers, we prove $(1)$. Take arbitrary morphisms $g,h \colon Y \rightrightarrows Z$ such that $g\circ f_{\lambda} =h\circ f_{\lambda}$ for any $\lambda$. We prove $g=h$. Let $m\colon S\rightarrowtail Y$ be the equalizer of the two morphisms $g$ and $h$. Then every morphism in the family factors through $m$, and the assumption $(2)$ implies that $m$ is an isomorphism. This proves $g=h$. 
\end{proof}

\begin{lemma}
    \label{lem:sgtNaturality}
    For any morphism $f\colon X\to Y$ in a topos, the diagram
    \[
    \begin{tikzcd}
        X\ar[r,"\sgt_{X}", tail]\ar[d, "f"]&PX\ar[d,"\exists_f"]\\
        Y\ar[r,"\sgt_Y", tail]&PY
    \end{tikzcd}
    \]
    commutes.
\end{lemma}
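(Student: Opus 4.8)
The plan is to prove the identity by transposing both composites across the power-object adjunction $\E(A, PX)\cong \Sub_{\E}(A\times X)$ and checking that they name the same subobject. Both $\exists_{f}\circ \sgt_X$ and $\sgt_Y\circ f$ are morphisms $X\to PY$, so under transposition with the parameter object $A=X$ each corresponds to a subobject of $X\times Y$; I claim that in both cases this subobject is the graph $\Gamma_f\rightarrowtail X\times Y$ of $f$, i.e. the image of $\langle\id_X,f\rangle\colon X\to X\times Y$. Since transposition is a bijection, equality of the two named subobjects immediately yields $\exists_f\circ\sgt_X=\sgt_Y\circ f$.

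The two ingredients I would isolate first are standard facts about singletons and existential images, using the convention $\E(A,PX)\cong\Sub_{\E}(A\times X)$, $g\mapsto\{(a,x)\mid x\in g(a)\}$. First, the singleton $\sgt_X\colon X\to PX$ is by definition the transpose of the characteristic map of the diagonal $\Delta_X\colon X\rightarrowtail X\times X$; equivalently, under the above bijection with $A=X$ it names precisely $\Delta_X$. Second, the existential-image map satisfies the naturality property that for any $g\colon A\to PX$ naming a subobject $R\rightarrowtail A\times X$, the composite $\exists_f\circ g\colon A\to PY$ names the direct image $\exists_{\id_A\times f}(R)\rightarrowtail A\times Y$ of $R$ along $\id_A\times f$. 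This is exactly the internal-logic reading of $\exists_f$ as ``take the image along $f$'', and follows from the adjunction $\exists_f\dashv f^{-1}$ together with the Beck--Chevalley condition.

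With these in hand the computation is immediate. On the one hand, precomposition by $f$ transposes to pullback along $f\times\id_Y\colon X\times Y\to Y\times Y$, so $\sgt_Y\circ f$ names $(f\times\id_Y)^{-1}(\Delta_Y)=\{(x,y)\mid f(x)=y\}=\Gamma_f$. On the other hand, $\sgt_X$ names $\Delta_X\rightarrowtail X\times X$, and applying the second fact with $A=X$ shows that $\exists_f\circ\sgt_X$ names $\exists_{\id_X\times f}(\Delta_X)$, the image of the diagonal under $\id_X\times f\colon X\times X\to X\times Y$; since $(a,a)\mapsto(a,f(a))$, this image is again $\Gamma_f$. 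Both composites name $\Gamma_f$, so they coincide.

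The only genuinely delicate point is the second standard fact — the compatibility of $\exists_f$ with transposition — since one must track the variances and invoke Beck--Chevalley for the square relating $\id_A\times f$ to $f$; everything else is a direct substitution. If one prefers to avoid phrasing the argument through subobjects, the same reasoning can be carried out in the internal language, where it reduces to the tautology that the image of the singleton $\{x\}$ under $f$ is the singleton $\{f(x)\}$.
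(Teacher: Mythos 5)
Your proof is correct and follows essentially the same route as the paper: both transpose the two composites across the bijection $\E(X,PY)\cong\Sub(X\times Y)$ and identify each with the graph of $f$, computing $\exists_f\circ\sgt_X$ as the image of the diagonal along $\id_X\times f$ and $\sgt_Y\circ f$ as the pullback of $\Delta_Y$ along $f\times\id_Y$. The only difference is that you spell out the Beck--Chevalley justification for the compatibility of $\exists_f$ with transposition, which the paper leaves implicit.
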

\begin{proof}
Via the bijection $\E(X, PY) \cong \Sub(X\times Y)$,
    both of the two maps correspond to the subobject
    \[
    \langle\id_X, f \rangle \colon X \rightarrowtail X\times Y.
    \]
    In fact, the upper right part corresponds to the image of the composite
    \[
    \begin{tikzcd}
        X \ar[r,rightarrowtail, "\Delta_X"] &X\times X \ar[r,"\id_X \times f"]& X\times Y,
    \end{tikzcd}
    \]
    and the lower left part corresponds to the pullback 
    \[
    \begin{tikzcd}
        X\ar[r, "f"] \ar[d, rightarrowtail, "{\langle \id_X, f\rangle }"'
        ] \ar[rd, phantom, "\lrcorner", very near start]
        & Y\ar[d, "\Delta_Y", rightarrowtail]\\
        X\times Y \ar[r,"f\times \id_Y"] & Y\times Y.
    \end{tikzcd}
    \]
\end{proof}

\begin{lemma}
\label{lem:sgtExtNaturality}
    For any monomorphism $m \colon X\rightarrowtail Y$ in a topos, the diagram 
    \[
    \begin{tikzcd}
        X\ar[r,"\sgt_{X}", tail]\ar[d,tail, "m"]&PX\\
        Y\ar[r,"\sgt_{Y}", tail]&PY\ar[u,"m^{-1}"']
    \end{tikzcd}
    \]
    commutes.
\end{lemma}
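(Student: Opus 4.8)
The plan is to mirror the proof of Lemma~\ref{lem:sgtNaturality} and identify both legs of the square, viewed as morphisms $X \to PX$, with subobjects of $X \times X$ under the defining bijection $\E(X, PX) \cong \Sub(X \times X)$. I would show that the singleton map $\sgt_X$ and the composite $m^{-1}\circ \sgt_Y \circ m$ both correspond to the diagonal $\langle \id_X, \id_X\rangle \colon X \rightarrowtail X \times X$; since the correspondence is a bijection, this yields the desired equality $\sgt_X = m^{-1}\circ \sgt_Y \circ m$, which is exactly the commutativity of the stated square.

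First I would recall, as in Lemma~\ref{lem:sgtNaturality}, that $\sgt_X$ corresponds to the diagonal $\langle \id_X, \id_X\rangle$. Then I would trace the composite from the right. Precomposing a map $Y \to PY$ with $m$ corresponds to pulling the associated subobject of $Y \times Y$ back along $m \times \id_Y$, so $\sgt_Y \circ m \colon X \to PY$ corresponds to $(m \times \id_Y)^{-1}\langle \id_Y, \id_Y\rangle$, namely the graph $\langle \id_X, m\rangle \colon X \rightarrowtail X \times Y$ of $m$. Postcomposing with the inverse-image map $m^{-1}\colon PY \to PX$ corresponds to pulling the associated subobject of $X \times Y$ back along $\id_X \times m$, so the full composite $m^{-1}\circ \sgt_Y \circ m \colon X \to PX$ corresponds to $(\id_X \times m)^{-1}\langle \id_X, m\rangle$ inside $X \times X$.

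It then remains to compute this last pullback. Since $\id_X \times m$ sends a generalized element $(x,x')$ to $(x, m(x'))$ while the graph of $m$ consists of the pairs whose second coordinate is $m$ of the first, the pullback is carved out by the equation $m(x') = m(x)$. This is the single place where the hypothesis enters: because $m$ is a monomorphism, $m(x') = m(x)$ forces $x' = x$, so the pullback collapses to the diagonal $\langle \id_X, \id_X\rangle$. (Diagrammatically, one checks that pulling $\langle \id_X, m\rangle$ back along $\id_X \times m$ yields $\langle \id_X, \id_X\rangle$, monicity of $m$ being exactly what is needed here.) This matches the subobject found for $\sgt_X$, completing the argument.

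The step I expect to require the most care is the bookkeeping for how the two operations act on the subobject correspondence — precomposition by $m$ as pullback along $m \times \id_Y$, and postcomposition by $m^{-1}$ as pullback along $\id_X \times m$ — together with confirming that monicity is precisely what distinguishes the diagonal from the kernel-pair relation $\{(x,x') : m(x) = m(x')\}$. Alternatively, a shorter route bypasses the subobject computation: by Lemma~\ref{lem:sgtNaturality} one has $\sgt_Y \circ m = \exists_m \circ \sgt_X$, hence $m^{-1}\circ \sgt_Y \circ m = m^{-1}\circ \exists_m \circ \sgt_X$, and it suffices to invoke the standard fact that $m^{-1}\circ \exists_m = \id_{PX}$ for a monomorphism $m$ (equivalently, that the unit of $\exists_m \dashv m^{-1}$ is an isomorphism when $m$ is monic). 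I would likely present the direct subobject argument to match the style of the surrounding lemmas, relegating this adjunction identity to a closing remark.
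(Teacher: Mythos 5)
Your argument is correct and rests on the same key fact as the paper's own proof: pulling the diagonal of $Y$ back along $m$ in both coordinates yields the diagonal of $X$ precisely because $m$ is monic. The paper packages this more compactly --- it observes that $\delta_Y\circ(m\times m)=\delta_X$ and then transposes via the naturality of the three-variable adjunction $\E(A\times B,C)\cong\E(A,C^B)$ --- whereas you unfold the same computation in $\Sub(X\times X)$ in two pullback steps (along $m\times\id_Y$ and then $\id_X\times m$, which compose to $m\times m$), matching the style of Lemma~\ref{lem:sgtNaturality}; your closing alternative via $m^{-1}\circ\exists_m=\id_{PX}$ for monic $m$ is also a valid shortcut.
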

\begin{proof}
    The diagram
    \[
    \begin{tikzcd}[row sep = 10 pt]
        X\times X\ar[rd,"\delta_X"]\ar[dd,"m\times m"', tail]&\\
        & \Omega\\
        Y \times Y\ar[ru, "\delta_Y"']
    \end{tikzcd}
    \]
    commutes since the morphism $m$ is monic.
    Taking the transpose using the naturality of the three-variable adjunction $\E(A\times B , C)\cong\E(A, C^B)$, we obtain the commutativity as stated.
\end{proof}

\printbibliography
\end{document}